\title[Dimension-free log-Sobolev inequalities]{Dimension-free log-Sobolev inequalities for mixture distributions}
\author{Hong-Bin Chen \and Sinho Chewi \and Jonathan Niles-Weed}
\date{\today}
\address[Hong-Bin Chen]{Courant Institute of Mathematical Sciences, New York University, New York, NY, USA}
\email{hbchen@cims.nyu.edu}
\address[Sinho Chewi]{Department of Mathematics, Massachusetts Institute of Technology, Cambridge, MA, USA}
\email{schewi@mit.edu}
\address[Jonathan Niles-Weed]{Courant Institute of Mathematical Sciences, New York University, New York, NY, USA}
\email{jnw@cims.nyu.edu}
\declaretheorem[name=Corollary]{cor}
\declaretheorem[name=Lemma]{lem}
\declaretheorem[name=Proposition]{prop}
\declaretheorem[name=Remark, style=remark]{rmk}
\declaretheorem[name=Theorem]{thm}
\declaretheorem[name=Theorem, numbered=no]{thm*}
\begin{document}

\maketitle

\begin{abstract}
    We prove that if ${(\PP_x)}_{x\in \sX}$ is a family of probability measures which satisfy the log-Sobolev inequality and whose pairwise chi-squared divergences are uniformly bounded, and $\mu$ is any mixing distribution on $\sX$, then the mixture $\int \PP_x \, \D \mu(x)$ satisfies a log-Sobolev inequality. In various settings of interest, the resulting log-Sobolev constant is dimension-free. In particular, our result implies a conjecture of Zimmermann and Bardet et al.\ that Gaussian convolutions of measures with bounded support enjoy dimension-free log-Sobolev inequalities. 
\end{abstract}


\section{Introduction}

Functional inequalities, such as the Poincar\'e inequality and the log-Sobolev inequality, have played a key role in the study of subjects such as concentration of measure and quantitative convergence analysis of Markov processes~\cite{bakrygentilledoux2014, vanhandel2018probability} (in particular for spin systems~\cite{martinelli1999spinsystems, weitz2004thesis}), as well as the geometry of metric measure spaces~\cite{ledoux2000markovdiffusion}. It is therefore of considerable interest to identify situations in which such inequalities hold, and furthermore to identify simple criteria which imply their validity.

We begin with a few motivating examples. Suppose that $\mu$ is a probability measure on $\R^d$ whose support is contained in the Euclidean ball of radius $R$, and let $\gamma_{0,t}$ denote the centered Gaussian distribution with variance $tI_d$.
What functional inequalities can we expect the convolution measure $\mu * \gamma_{0,t}$ to satisfy? This question, motivated by random matrix theory, was initiated in~\cite{zimmermann1, zimmermann2}, and further investigated in~\cite{wang2016functional, bardetetal2018gaussianconv}. These works prove that $\mu * \gamma_{0,t}$ satisfies both a Poincar\'e inequality and a log-Sobolev inequality; moreover, the Poincar\'e inequality holds with a constant depending only on $R$ and $t$, and not on the dimension $d$. Furthermore,~\cite{bardetetal2018gaussianconv} conjectures that the same holds true for the log-Sobolev constant, and they verify the conjecture in a number of special cases.

The sharp dimension dependence of the log-Sobolev inequality for Gaussian convolutions is of particular interest due to numerous recent applications in non-convex optimization and sampling; we refer to~\cite{chaudharietal2019entropysgd, blocketal2020langevinmanifold, blockmrougrahklin2020generative}.

Another line of work~\cite{chafaimalrieu2010mixtures, schlichting2019mixtures} studies the following question: let $\PP_0$ and $\PP_1$ be two probability measures on $\R^d$, and consider the mixture distribution $(1-p) \PP_0 + p\PP_1$ with mixing weight $p\in (0,1)$. If both $\PP_0$ and $\PP_1$ satisfy log-Sobolev inequalities, when does the mixture satisfy a log-Sobolev inequality too?

Although the two preceding examples may at first glance appear to be different in nature, we can in fact place them in the same framework, as follows. Let ${(\PP_x)}_{x\in \sX}$ be a family of probability measures satisfying the log-Sobolev inequality, and let $\mu$ be a mixture distribution on $\sX$; here, $\sX$ may be finite or infinite. When does the mixture $\int \PP_x \, \D \mu(x)$ satisfy a log-Sobolev inequality?
\begin{itemize}
    \item For the Gaussian convolution example, we take $\PP_x$ to be the Gaussian distribution with mean $x$ and variance $tI_d$.
    \item For the mixture example, we take $\mu$ to be the Bernoulli distribution with parameter $p$.
\end{itemize}

In this paper, we identify general conditions which ensure that a mixture distribution satisfies a log-Sobolev inequality. Our main contribution can be summarized as follows.

\begin{thm*}[informal]
    Let ${(\PP_x)}_{x\in \sX}$ be a family of probability measures satisfying the log-Sobolev inequality with a uniform constant $C_1$.
    Assume that the pairwise chi-squared divergences $\chi^2(\PP_x \mmid \PP_{x'})$ are uniformly bounded by $C_2$.
    Then, the mixture $\int \PP_x \, \D \mu(x)$ satisfies a log-Sobolev inequality with a constant depending only on $C_1$ and $C_2$.
\end{thm*}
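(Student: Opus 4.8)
The plan is to run an entropy-decomposition argument and close it by absorption, isolating the ``mixing'' part of the entropy as the one genuinely new estimate. Write $\PP := \int \PP_x \, \D\mu(x)$ for the mixture and set $g = f^2$. The starting point is the exact tensorization-type identity
\[
\operatorname{Ent}_\PP(f^2) = \int \operatorname{Ent}_{\PP_x}(f^2)\,\D\mu(x) + \operatorname{Ent}_\mu(m), \qquad m(x) := \int f^2\,\D\PP_x,
\]
valid for any mixture and obtained by expanding all three entropies against a common reference. The uniform log-Sobolev hypothesis disposes of the first term: since each $\PP_x$ satisfies LSI with constant $C_1$, one has $\int \operatorname{Ent}_{\PP_x}(f^2)\,\D\mu(x) \le 2C_1 \int\!\!\int |\nabla f|^2 \,\D\PP_x\,\D\mu(x) = 2C_1 \int |\nabla f|^2\,\D\PP$, which is exactly the Dirichlet energy of $f$ for the mixture. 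Everything thus reduces to controlling the mixing entropy $\operatorname{Ent}_\mu(m)$.

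The key claim I would aim for is an entropy-contraction estimate
\[
\operatorname{Ent}_\mu(m) \le \theta\,\operatorname{Ent}_\PP(f^2) \qquad \text{for some } \theta = \theta(C_2) < 1.
\]
Granting it, the theorem is immediate: substituting into the decomposition yields $(1-\theta)\operatorname{Ent}_\PP(f^2) \le 2C_1 \int|\nabla f|^2\,\D\PP$, a log-Sobolev inequality for $\PP$ with constant $C_1/(1-\theta)$, depending only on $C_1$ and $C_2$. Equivalently, the claim says the within-component entropy $\int \operatorname{Ent}_{\PP_x}(f^2)\,\D\mu$ is at least a fixed fraction $(1-\theta)$ of the total; in information-theoretic language this is a strong data-processing inequality for the conditional-expectation operator $g(Y) \mapsto \mathbb{E}[g(Y)\mid X]$ attached to the two-stage sampling $X\sim\mu$, $Y\sim\PP_X$, and the bounded chi-squared divergences are precisely what should push the contraction coefficient below $1$.

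To prove the contraction I would reduce to a pairwise estimate. Using the symmetric bound $\operatorname{Ent}_\mu(m) \le \tfrac12 \iint (m(x)-m(x'))\log\frac{m(x)}{m(x')}\,\D\mu(x)\,\D\mu(x')$ (a consequence of Jensen's inequality), it suffices to show, for every pair,
\[
\bigl(\mathbb{E}_{\PP_x}[g]-\mathbb{E}_{\PP_{x'}}[g]\bigr)\log\frac{\mathbb{E}_{\PP_x}[g]}{\mathbb{E}_{\PP_{x'}}[g]} \le \kappa(C_2)\,\bigl(\operatorname{Ent}_{\PP_x}(g)+\operatorname{Ent}_{\PP_{x'}}(g)\bigr),
\]
after which averaging returns the contraction with $\theta = \kappa/(1+\kappa)$. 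Two structural facts feed this: by joint convexity of the chi-squared divergence one has $\chi^2(\PP_x \mmid \PP) \le C_2$ directly against the mixture, and also $\chi^2\bigl(\PP_x \mmid \tfrac12(\PP_x+\PP_{x'})\bigr) \le C_2/2$, so each component has density ratio at most $2$ relative to the pair's average; meanwhile the chi-squared bound itself limits how far the two means $\mathbb{E}_{\PP_x}[g]$ and $\mathbb{E}_{\PP_{x'}}[g]$ can drift apart, via a covariance estimate.

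The hard part will be the pairwise estimate, and specifically the logarithmic factor. The naive route---bounding $(a-b)\log(a/b) \le (a-b)^2/b$ and then $|a-b| \le \sqrt{C_2}\,\sqrt{\operatorname{Var}_{\PP_{x'}}(g)}$---is too lossy: it replaces the entropy of $g$ by its variance, and for heavy-tailed $g$ (say $g$ large on a set of small probability) the quantity $\operatorname{Var}(g)/\mathbb{E}[g]$ is not controlled by $\operatorname{Ent}(g)$, so the resulting constant degrades. The estimate must retain the logarithmic structure of $g$; I expect to obtain it by working relative to the pair's average measure, where the density ratios are bounded by $2$, and by combining a Gibbs/variational representation of $\log \mathbb{E}_{\PP_x}[g]$ with the chi-squared bound, so that the tails of $g$ are metered by $\operatorname{Ent}_{\PP_x}(g)+\operatorname{Ent}_{\PP_{x'}}(g)$ rather than by a second moment. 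Establishing this clean entropy-to-entropy comparison, with a constant depending on $C_2$ alone, is the crux of the argument.
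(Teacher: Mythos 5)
Your reduction steps are all correct---the exact decomposition, the treatment of the within-component term, the covariance bound $\ent_\mu(m)\le\tfrac12\iint (m(x)-m(x'))\log\frac{m(x)}{m(x')}\,\D\mu(x)\,\D\mu(x')$, and the arithmetic turning a pairwise estimate into a contraction coefficient $\theta=\kappa/(1+\kappa)<1$. But this is all the easy part (the decomposition is the same one the paper starts from), and the proposal then stops exactly at the point where the mathematical content lies: the pairwise entropy-to-entropy estimate is asserted, not proven. Moreover, the tools you propose for it cannot deliver it in the form you need. The Gibbs/Donsker--Varadhan principle requires exponential moments of the test function, and a chi-squared bound controls only the \emph{second} moment of the likelihood ratio $\D\PP_x/\D\PP_{x'}$, whose exponential moments may be infinite; the one test function whose exponential moment a chi-squared bound does control is $\log(\D\PP_x/\D\PP_{x'})$, and running Donsker--Varadhan with that choice yields precisely
\begin{equation*}
    \E_\pi f \log\frac{\E_\pi f}{\E_\rho f}\;\le\; \ent_\pi(f) \,+\, \E_\pi(f)\log\bigl(1+\chi^2(\pi\mmid\rho)\bigr)\,,
\end{equation*}
i.e., an \emph{additive} defect term proportional to $\E_\pi(f)$ rather than a multiplicative entropy bound. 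Carried out, your plan therefore produces $\ent_\mu(m)\le \E_\mu \ent_{\PP_x}(f^2) + \log(1+C_2)\,\E_{\mu\PP}(f^2)$, which is a defective log-Sobolev inequality, not the contraction $\ent_\mu(m)\le\theta\,\ent_{\mu\PP}(f^2)$ you need for absorption.

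This is in fact exactly the paper's route: its key lemma is the displayed inequality above (proved by two applications of Donsker--Varadhan), it accepts the resulting defective LSI, and it then removes the defect by \emph{tightening} with a Poincar\'e inequality, which is established separately by a Cauchy--Schwarz and Young's inequality argument; this is why the Poincar\'e statement is part (1) of the paper's theorem and why the final constant carries the factor $K_{p,\chi^2}^{p^*}$ through the tightening step. Your contraction claim, if true, would be a genuinely stronger statement than what the paper proves (no defect, no Poincar\'e input, no logarithmic factor), and I could not produce a counterexample to it---but be aware that it is delicate: what the chi-squared hypothesis readily gives is the \emph{variance}-level contraction $\var_\mu(m)\le \frac{C_2}{1+C_2}\var_{\mu\PP}(f)$ (essentially the paper's Poincar\'e proof), and for general Markov kernels the KL contraction coefficient can strictly exceed the chi-squared one, so passing from variance contraction to entropy contraction is precisely where a new idea is required. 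The structural facts you cite (convexity of $\chi^2$, density ratio at most $2$ relative to the pair's midpoint) do not bridge that gap; indeed, working relative to the midpoint measure and applying the lemma above again produces an additive term $\E_\pi(f)\log 2$. Until the pairwise estimate is supplied, the proposal is a strategy, not a proof.
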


In fact, in our main result, we will relax the assumption that the chi-squared divergences are uniformly bounded into a moment condition; see Theorem~\ref{thm:LS_new}. In turn, this will allow us to prove log-Sobolev inequalities for Gaussian convolutions of measures with sub-Gaussian tails, provided that the variance of the Gaussians is sufficiently large.

Crucially, the log-Sobolev constant has no dependence on the mixing distribution $\mu$. As we show in Section~\ref{scn:applications}, our general theorem yields dimension-free log-Sobolev inequalities in various settings; in particular, our result implies the conjecture of~\cite{zimmermann1, zimmermann2, bardetetal2018gaussianconv}.

The rest of the paper is organized as follows. In Section~\ref{scn:background}, we describe the setting of our general investigation and recall the definitions of a Poincar\'e inequality and a log-Sobolev inequality. We then state and prove our main theorem in Section~\ref{sect:main_new}.

In Section~\ref{scn:applications}, we illustrate our general result in a number of applications. Section~\ref{scn:gaussian_conv} is devoted to the proof of the aforementioned conjecture, and Sections~\ref{sec:sg_tails} and~\ref{sect:gen_diff} generalize the result to Gaussian convolutions of measures with sub-Gaussian tails and other diffusion semigroups. In Section~\ref{scn:mixtures_of_two}, we compare our results to prior work on functional inequalities for mixtures of two distributions. Then, in Section~\ref{scn:hypercube}, we discuss analogues of our result on the Boolean hypercube.


\section{Background and notation}\label{scn:background}

To state our results in a form that applies to both discrete and continuous mixture distributions, we adopt the general framework of~\cite{bakrygentilledoux2014} and let $\Gamma$ be a suitable notion of a gradient operator.
More precisely, let $\sY$ be a Polish space equipped with the Borel $\sigma$-algebra $\borelY$, and let $\mc A$ be a subspace of bounded measurable functions on $E$ containing all constant functions. Let $\Gamma:\mc A\times \mc A \to \mc A$ be a symmetric bilinear operator satisfying $\Gamma(f,f)\geq 0$ everywhere on $\sY$ for every $f\in \mc A$. In addition, we require $\Gamma$ to satisfy
\begin{align}\label{eq:Gamma_prop}
    \Gamma(1,1) =0\,,
\end{align}
where $1$ is understood as a constant function. 
For brevity, we write $\Gamma(f) = \Gamma(f,f)$. 

Important examples include the usual squared gradient $\Gamma(f) = \norm{\nabla f}^2$ on $\R^d$, and $\Gamma(f) = \sum_{i=1}^d {(D_i f)}^2$ on a product space $\sX = \eu X^d$, where $D_i f$ is the discrete gradient
\begin{align*}
    D_i f(x)
    &:= \sup_{x_i' \in \mc X} f(x_1,\dotsc,x_{i-1},x_i',x_{i+1},\dotsc,x_d) \\
    &\qquad{} - \inf_{x_i' \in \mc X} f(x_1,\dotsc,x_{i-1},x_i',x_{i+1},\dotsc,x_d)\,.
\end{align*}

For any probability measure $\rho$ on $(\sY, \borelY)$, we write
\begin{align*}
    \E_\rho [f] := \int_{\sY} f\,\D \rho
\end{align*}
for a $\rho$-integrable function $f$. In addition, we define
\begin{align*}
    \on{var}_\rho (f)
    &:= \E_\rho[{(f-\E_\rho)}^2]\,,\\
    \on{ent}_\rho (g)
    &:= \E_\rho(g\log g)-\E_\rho g\log \E_\rho g\,,
\end{align*}
for suitable measurable functions $f$ and $g$, with $g$ nonnegative. When there is no confusion, we often omit the brackets and parentheses in these expressions. If $X$ is a random variable with law $\mu$, we also write $\E f(X)=\E_\mu f$ and similarly for $\var$ and $\ent$.

We say that $\rho$ satisfies a Poinc\'are inequality (PI) if there is a constant $C$ such that
\begin{align}\tag{$\rm PI$} \label{eq:pi}
    \on{var}_\rho(f)\leq C\E_\rho\Gamma(f)\,,\qquad\forall f \in \mc A\,.
\end{align}
The optimal constant in this inequality is denoted $C_{\on{P}}(\rho)$. In addition, $\rho$ is said to satisfy a logarithmic Sobolev inequality (LSI) if there is a constant $C$ such that
\begin{align}\tag{$\rm LSI$}\label{eq:lsi}
    \on{ent}_\rho(f^2)\leq 2C\E_\rho\Gamma(f)\,,\qquad\forall f \in \mc A\,.
\end{align}
Similarly, we let $C_{\on{LS}}(\rho)$ denote the optimal constant in this inequality. 

For probability measures $\rho_1$ and $\rho_2$ on $(\sY,\borelY)$, the Kullback-Leibler (KL) divergence and the chi-squared divergence are defined as
\begin{align*}
    D_{\rm KL}(\rho_1 \mmid \rho_2)
    &:= \on{ent}_{\rho_2} \bigl( \frac{\D \rho_1}{\D \rho_2}\bigr) 
    = \int_\sY \frac{\D\rho_1}{\D \rho_2} \ln \frac{\D \rho_1}{\D \rho_2} \, \D \rho_2 
    = \int_{\sY} \bigl(\ln \frac{\D \rho_1}{\D \rho_2}\bigr) \, \D \rho_1\,, \\
    \chi^2(\rho_1\mmid \rho_2)
    &:= \on{var}_{\rho_2} \bigl( \frac{\D \rho_1}{\D \rho_2}\bigr)
    = \int_\sY \bigl(\frac{\D \rho_1}{\D\rho_2}-1\bigr)^2 \, \D\rho_2
    = \int_\sY \frac{\D \rho_1}{\D \rho_2} \, \D \rho_1 - 1\,.
\end{align*}
The expressions above are understood to be $+\infty$ if $\rho_1$ is not absolutely continuous w.r.t.\ $\rho_2$.

\section{Main theorem}\label{sect:main_new}


In addition to $(\sY,\borelY)$, let $\sX$ be a polish space with Borel $\sigma$-algebra $\borelX$. We consider a Markov kernel $\PP : \sX\times \borelY \to [0,1]$ satisfying: (1) for each $x\in \sX$, $\PP(x,\cdot)$ is a probability measure on $(\sY,\borelY)$, and (2) for each $B\in \mc \borelY$, $\PP(\cdot,B)$ is a $\borelX$-measurable function on $\sX$. We also write $\PP_x := \PP(x,\cdot)$ for convenience. This kernel naturally induces a transition map which maps bounded measurable functions on $\sX$ to bounded measurable functions on $\sY$:
\begin{align*}
    Pf(x) := \int_{\sX} f \, \D\PP_x\,,\qquad\forall x\in \sX\,.
\end{align*}
For a probability measure $\mu$ on $(\sX,\borelX)$, we denote by $\mu \PP$ the probability measure on $(\sY, \borelY)$ defined by the duality
\begin{align*}
    \int_\sY f \,\D\mu \PP = \int_\sX \PP f\, \D \mu\,.
\end{align*}

Lastly, we introduce the following quantities.
\begin{align}
    K_{\rm P}(\PP;\mu)
    &:= \esssup_{\text{$\mu$-a.s.}\ x\in \sX}C_{\on{P}}(\PP_x)\,, \label{eq:K_P_new}\\
    K_{\on{LS}}(\PP;\mu)
    &:= \esssup_{\text{$\mu$-a.s.}\ x\in \sX}C_{\on{LS}}(\PP_x)\,,\label{eq:K_LS_new}\\
    K_{p,\,\chi^2}(\PP;\mu)
    &:= {\E\bigl[{\bigl(1+\chi^2(P_X\mmid P_{X'})\bigr)}^p\bigr]}^{\frac{1}{p}}\,,
    \label{eq:K_chi^2_new}
\end{align}
for $p\geq 1$, where $X$ and $X'$ are i.i.d.\ with law $\mu$.
Since~\eqref{eq:lsi} implies~\eqref{eq:pi} with the same constant, we have $K_{\rm P}(\PP;\mu) \le K_{\rm LS}(\PP;\mu)$. Throughout, for $p\geq1$, we set $p^*=\frac{p}{p-1}$ to be the dual exponent.

\begin{thm}\label{thm:LS_new}
\leavevmode
\begin{enumerate}
    \item \label{item:P} If $K_{\rm P}(\PP;\mu)$ and $K_{p,\,\chi^2}(\PP;\mu)$ are finite for some $p>1$, then $\mu \PP$ satisfies~\eqref{eq:pi} with constant
\begin{equation*}
    C_{\on{P}}(\mu \PP) \leq K_{\rm P}\, \{p^*+ \, K_{p,\,\chi^2}^{p^*}\}\,,
\end{equation*} 
where $ K_{\rm P}= K_{\rm P}(\PP;\mu)$ and $K_{p,\,\chi^2}=K_{p,\,\chi^2}(\PP;\mu)$.
    \item \label{item:LS} If $K_{\rm LS}(\PP;\mu)$ and $K_{p,\,\chi^2}(\PP;\mu)$ are finite for some $p>1$, then $\mu \PP$ satisfies~\eqref{eq:lsi} with constant
    \begin{align*}
        C_{\rm LS}(\mu \PP)
        &\le 3K_{\rm LS} \, (p^* + K_{p,\;\chi^2}^{p^*}) \, (1 + \log K_{p, \;\chi^2}^{p^*})\,,
    \end{align*}
where $ K_{\rm LS}= K_{\rm LS}(\PP;\mu)$ and $K_{p,\,\chi^2}=K_{p,\,\chi^2}(\PP;\mu)$.
\end{enumerate}

\end{thm}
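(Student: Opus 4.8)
The plan is to reduce both inequalities to the corresponding ``local'' functional inequalities for the components $\PP_x$ together with a ``between-components'' term, and then to control the latter using only the pairwise divergences. Writing $Pg(x)=\E_{\PP_x}g$, the starting point is the exact decompositions
\[
    \on{var}_{\mu\PP}(f)=\E_\mu[\on{var}_{\PP_X}(f)]+\on{var}_\mu(Pf),\qquad
    \on{ent}_{\mu\PP}(f^2)=\E_\mu[\on{ent}_{\PP_X}(f^2)]+\on{ent}_\mu(Pf^2),
\]
valid for $f\in\mc A$ (the entropy identity follows by expanding $\on{ent}_{\PP_x}(f^2)=\E_{\PP_x}[f^2\log f^2]-Pf^2(x)\log Pf^2(x)$ and integrating in $x$). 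In each case the first term is immediate: since $K_{\rm P}$ and $K_{\rm LS}$ are essential suprema of the component constants and $\E_\mu[\E_{\PP_X}\Gamma(f)]=\E_{\mu\PP}\Gamma(f)$, we get $\E_\mu[\on{var}_{\PP_X}(f)]\le K_{\rm P}\,\E_{\mu\PP}\Gamma(f)$ and $\E_\mu[\on{ent}_{\PP_X}(f^2)]\le 2K_{\rm LS}\,\E_{\mu\PP}\Gamma(f)$. Thus everything reduces to bounding $\on{var}_\mu(Pf)$ and $\on{ent}_\mu(Pf^2)$, and since the base space $(\sX,\mu)$ carries no gradient structure, these can only be accessed through the closeness of the measures $\PP_x$.

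For the Poincar\'e term I would first write $\on{var}_\mu(Pf)=\frac12\,\E_{X,X'}[(Pf(X)-Pf(X'))^2]$ with $X,X'$ i.i.d.\ $\mu$, and bound the integrand by Cauchy--Schwarz in the two symmetric ways
\[
    (Pf(x)-Pf(x'))^2\le \on{var}_{\PP_{x'}}(f)\,\chi^2(\PP_x\mmid\PP_{x'})
    \quad\text{and}\quad
    (Pf(x)-Pf(x'))^2\le \on{var}_{\PP_{x}}(f)\,\chi^2(\PP_{x'}\mmid\PP_{x}).
\]
The naive route of fixing one reference and pulling the Dirichlet energy out of the integral fails: it leaves a factor of the form $\E_{X'}[\on{var}_{\PP_{X'}}(f)\,\E_X\chi^2(\PP_X\mmid\PP_{X'})]$, which cannot be bounded by $\E_{\mu\PP}\Gamma(f)$ because the Dirichlet energy may concentrate exactly where the divergences are large. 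The key idea is instead to take the geometric mean of the two bounds, replacing the reference variances by $\sqrt{\on{var}_{\PP_X}(f)\on{var}_{\PP_{X'}}(f)}$; because $X$ and $X'$ are \emph{independent}, Cauchy--Schwarz over $(X,X')$ factorizes this into $\E_\mu[\on{var}_{\PP_X}(f)]\le K_{\rm P}\E_{\mu\PP}\Gamma(f)$ times a purely divergence-dependent factor, keeping the Dirichlet energy to first order as homogeneity demands. When $p\ge 2$ the divergence factor is at most $\|\chi^2(\PP_X\mmid\PP_{X'})\|_{L^2(\mu\otimes\mu)}\le K_{p,\chi^2}$ and one is done; for general $p>1$ one truncates the divergence at a level of order $p^*$, handling the bulk by this $L^2$ argument and the tail by H\"older against the $L^p$ moment, the two pieces producing the additive $p^*+K_{p,\chi^2}^{p^*}$.

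For the log-Sobolev term I would use the symmetrized bound $\on{ent}_\mu(m)\le\frac12\E_{X,X'}[(m(X)-m(X'))(\log m(X)-\log m(X'))]$, valid for $m\ge0$ because the difference of the right and left sides equals $\E_\mu[m]\,(\log\E_\mu[m]-\E_\mu\log m)\ge0$, applied with $m=Pf^2$. Splitting $(a-b)(\log a-\log b)$ according to whether $\PP_x$ and $\PP_{x'}$ are close or far, the ``close'' regime behaves like the Poincar\'e analysis applied to $f^2$ and is handled by the same decoupling, with $\on{var}_{\PP_x}(f^2)$ converted back to $\E_{\PP_x}\Gamma(f)$ through the component log-Sobolev inequality; the ``far'' regime is where the logarithm $\log\frac{m(X)}{m(X')}$ is genuinely active, and since this ratio is controlled by $1+\chi^2(\PP_X\mmid\PP_{X'})$, taking moments and invoking the definition of $K_{p,\chi^2}$ produces precisely the extra factor $1+\log K_{p,\chi^2}^{p^*}$ relative to the Poincar\'e bound.

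The main obstacle, and the crux of the argument, is the between-components term. Two difficulties must be overcome at once: there is no functional inequality available on $(\sX,\mu)$, so the term can only be reached through the divergences $\chi^2(\PP_x\mmid\PP_{x'})$; and homogeneity forces the Dirichlet energy to appear linearly, which rules out any estimate that extracts it at a higher $L^q$ norm. The resolution---a symmetric geometric-mean bound combined with the independence of $X$ and $X'$ to decouple the two component Dirichlet energies---is exactly what makes the final constant depend only on $K_{\rm P}$ (or $K_{\rm LS}$) and $K_{p,\chi^2}$, with no dependence on the mixing distribution $\mu$. For the log-Sobolev inequality there is the additional nonlinearity of $t\mapsto t\log t$, which I would handle via the logarithmic-mean inequality and the careful bookkeeping that yields the extra logarithmic factor.
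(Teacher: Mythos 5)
Your decompositions and the treatment of the local terms coincide with the paper's, and your geometric-mean decoupling does prove part~(1) in the regime $p \ge 2$: writing $H := \sqrt{\chi^2(\PP_X\mmid\PP_{X'})\,\chi^2(\PP_{X'}\mmid\PP_X)}$, Cauchy--Schwarz and independence give
\[
    \E\Bigl[\sqrt{\var_{\PP_X}(f)\,\var_{\PP_{X'}}(f)}\,H\Bigr]
    \le \E[\var_{\PP_X}(f)]\,\sqrt{\E[H^2]}\,,
\]
and $\E[H^2]$ is controlled by $K_{p,\,\chi^2}^2$ when $p\ge 2$. But this is the easy regime; the crux is $1<p<2$, and there your truncation does not close. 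The bulk term is fine ($\le M\,\E\var_{\PP_X}(f)$), but the tail term $\E[\sqrt{\var_{\PP_X}(f)\,\var_{\PP_{X'}}(f)}\,H\,\mathbf{1}_{\{H>M\}}]$ can only be decoupled from the component variances by Cauchy--Schwarz, which costs ${(\E[H^2\mathbf{1}_{\{H>M\}}])}^{1/2}$ --- a quantity that may be \emph{infinite} when only $\E[H^p]<\infty$ with $p<2$ (truncating from below does not create integrability of $H^2$). As you yourself observe, homogeneity pins the variance factor to exponent $2$ in any H\"older split, which forces exponent $2$ on the divergence factor; and conditioning on $X$ instead reintroduces exactly the variance/divergence correlation you flagged as fatal at the outset. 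The paper escapes this by a structurally different device: each $\PP_X$ is compared to the mixture $\mu\PP$ itself, so the multiplier variance $\var_{\mu\PP}(f)$ is \emph{deterministic} and hence lies in every $L^q$. Young's inequality can then load the entire product of divergences onto that deterministic factor,
\[
    \var \E_{\PP_X} f \le \frac{\lambda^p}{p}\,\var_{\mu\PP}(f)\,\E\bigl[\chi^2(\PP_X\mmid\mu\PP)\,{\chi^2(\mu\PP\mmid\PP_X)}^{p-1}\bigr] + \frac{\lambda^{-p^*}}{p^*}\,\E \var_{\PP_X}(f)\,,
\]
after which the resulting $\frac1p\var_{\mu\PP}(f)$ is absorbed into the left-hand side and the divergence moment is reduced to the pairwise quantity $K_{p,\,\chi^2}^p$ by H\"older and convexity. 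This absorption against a deterministic multiplier is the missing idea; without it your scheme cannot reach $p$ near $1$.

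The log-Sobolev half rests on two claims that are false. First, the component LSI bounds $\ent_{\PP_x}(f^2)$, not $\var_{\PP_x}(f^2)$, by $\E_{\PP_x}\Gamma(f)$; no inequality of the form $\var_\rho(f^2)\le c\,\E_\rho\Gamma(f)$ can hold uniformly in $f$ (for the standard Gaussian and truncations of $f(y)=e^{y^2/5}$, the energy $\E_\rho\Gamma(f)$ stays bounded while $\var_\rho(f^2)\to\infty$), so your ``close'' regime cannot be run as a Poincar\'e argument for $f^2$. Second, the ratio $\E_{\PP_X}(f^2)/\E_{\PP_{X'}}(f^2)$ is \emph{not} pointwise controlled by $1+\chi^2(\PP_X\mmid\PP_{X'})$: it can be as large as $\esssup \,\D\PP_X/\D\PP_{X'}$ (take $f^2$ concentrated where the density ratio is large), which is unbounded even when the chi-squared divergence is tiny. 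Repairing exactly this point is the content of the paper's key lemma, proved via Donsker--Varadhan: $\E_\pi f\log\frac{\E_\pi f}{\E_\rho f} \le \ent_\pi(f) + \E_\pi(f)\log(1+\chi^2(\pi\mmid\rho))$, i.e.\ the log-ratio is controlled by the divergence only up to an \emph{additive entropy error}. That error is then absorbed at rate $1/p$ using the Gibbs variational principle for entropy (this is where $K_{p,\,\chi^2}$ enters), yielding a defective LSI, which is finally tightened to a full LSI using the Poincar\'e inequality of part~(1). The product form $(p^*+K_{p,\,\chi^2}^{p^*})(1+\log K_{p,\,\chi^2}^{p^*})$ of the stated constant arises precisely from this tightening step, for which your sketch has no substitute.
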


\begin{rmk}
Our theorem is stated with a simpler constant for readability.
A slightly sharper constant can be read off from the proof. Our results clearly extend to the case $p=\infty$ ($p^*=1$) with 
\begin{align*}
    K_{\infty,\chi^2}(\PP;\mu) := 1+ \esssup_{\text{$\mu$-a.s.}\ x,x'\in \sX}\chi^2(\PP_x\mmid \PP_{x'})\,.
\end{align*}
\end{rmk}


For both steps, our starting point is to apply classical decompositions for the variance and the entropy, which have been used to prove functional inequalities for spin systems (see e.g.\ the appendix of~\cite{weitz2004thesis}). If $X$ is a random variable drawn according to $\mu$, then
\begin{align}
    \var_{\mu\PP} f
    &= \E \var_{\PP_X} f + \var \E_{\PP_X} f\,, \label{eq:decomp_pi_new} \\
    \ent_{\mu\PP} f^2
    &= \E \ent_{\PP_X} f^2 + \ent \E_{\PP_X} f^2\, \label{eq:decomp_lsi_new}.
\end{align}
In both of these decompositions, the first term is easy to handle because we can apply the PI, resp.\ LSI, for the family ${(\PP_x)}_{x\in\sX}$ inside the expectation.
The crux of the proof is therefore the second terms.

\begin{proof}[Proof of Theorem~\ref{thm:LS_new}~\eqref{item:P}]
    In the case $p = \infty$ (i.e., the pairwise chi-squared divergences are uniformly bounded), the Poincar\'e inequality can be proven via a straightforward generalization of~\cite{bardetetal2018gaussianconv}. However, the case $1 < p < \infty$ requires non-trivial modifications, and we present a complete proof.

Let $X$ be a random variable with law $\mu$.
As described above, we use the decomposition~\eqref{eq:decomp_pi_new}, and we focus on the problematic second term
\begin{align*}
    \var \E_{\PP_X} f
    &=  \E[\abs{\E_{\PP_X} f - \E_{\mu\PP} f}^2]\,.
\end{align*}
We can write
\begin{align*}
    \E_{\PP_X} f - \E_{\mu\PP} f &= \int f \, \Bigl(1 - \frac{\D \mu \PP}{\D \PP_X}\Bigr) \, \D \PP_X\\
    & = -\int f\, \Bigl(1 - \frac{\D \PP_{X}}{\D \mu\PP}\Bigr)\, \D \mu\PP\,.
\end{align*}
For brevity, we write $\chi^2_{\rho,\,\rho'} := \chi^2(\rho\mmid \rho')$.
Applying the Cauchy-Schwarz inequality to the above display, we have
\begin{align*}
    \var \E_{\PP_X} f
    &\leq \E \min\{(\var_{\mu\PP} f) \, \chi^2_{\PP_{X},\, \mu\PP}, \; (\var_{\PP_X} f) \,\chi^2_{\mu\PP,\, \PP_X}\}\\
    &\leq \E\bigl[ {(\var_{\mu\PP} f)}^{1/p} \, {(\chi^2_{\PP_{X},\, \mu\PP})}^{1/p} \, {(\var_{\PP_X} f)}^{1/p^*} \, {(\chi^2_{\mu\PP,\, \PP_X})}^{1/p^*}\bigr]\,.
\end{align*}
Then, Young's inequality implies that for all $\lambda > 0$,
\begin{align*}
   \var \E_{\PP_X} f\leq \frac{\lambda^p}{p} \, (\var_{\mu\PP}f) \E\bigl[(\chi^2_{\PP_{X},\, \mu\PP}) \, {(\chi^2_{\mu\PP,\, \PP_X})}^{p-1}\bigr] + \frac{\lambda^{-p^*}}{p^*}\E \var_{\PP_X} f\,.
\end{align*}
Setting
\begin{align*}
    \lambda = {\E\bigl[(\chi^2_{\PP_{X},\, \mu\PP}) \, {(\chi^2_{\mu\PP,\, \PP_X})}^{p-1}\bigr]}^{-\frac{1}{p}}
\end{align*}
and substituting the above into \eqref{eq:decomp_pi_new} yields
\begin{align*}
    \var_{\mu\PP}f \leq \bigl\{p^* + {\E\bigl[(\chi^2_{\PP_{X},\, \mu\PP}) \, {(\chi^2_{\mu\PP,\, \PP_X})}^{p-1}\bigr]}^{\frac{1}{p-1}}\bigr\}\E\var_{\PP_X} f\,.
\end{align*}
Using H\"older's inequality and the convexity of the chi-squared divergence, we can see
\begin{align*}
    \E\bigl[(\chi^2_{\PP_{X},\, \mu\PP}) \, {(\chi^2_{\mu\PP,\, \PP_X})}^{p-1}\bigr]
    &\leq \E\bigl[{(\chi^2_{\PP_X,\;\PP_{X'}})}^p\bigr]
\end{align*}
where $X'$ is an i.i.d.\ copy of $X$. The desired result follows from the definitions of $K_{\on{P}}(\PP;\mu)$ in \eqref{eq:K_LS_new} and $K_{p,\,\chi^2}(\PP;\mu)$ in \eqref{eq:K_chi^2_new}.
\end{proof}

To prove the second assertion in Theorem~\ref{thm:LS_new}, we derive a so-called defective LSI for $\mu \PP$, which can be tightened to yield a full LSI\@. In order to control the second term in \eqref{eq:decomp_lsi_new}, we need a lemma.

\begin{lem}\label{lemma:ineq_new}
Let $\pi$ and $\rho$ be two probability measures. Then, the following holds for every non-negative function $f$: 
\begin{equation*}
    \E_\pi f \log \frac{\E_\pi f}{\E_\rho f}
    \leq \on{ent}_\pi(f) + \E_\pi(f) \log \bigl(1+\chi^2(\pi\mmid \rho)\bigr)\,,
\end{equation*}
where by convention both sides vanish if $\E_\pi f = 0$.
\end{lem}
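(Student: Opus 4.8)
The plan is to prove this lemma by relating the left-hand side to an entropy functional and then controlling the discrepancy between the two measures $\pi$ and $\rho$ via the chi-squared divergence. First I would observe that the left-hand side $\E_\pi f \log \frac{\E_\pi f}{\E_\rho f}$ can be split by writing $\log \frac{\E_\pi f}{\E_\rho f} = \log \E_\pi f - \log \E_\rho f$, so that the quantity naturally involves the normalization constants of $f$ under the two measures. Equivalently, normalizing $f$ so that $\E_\pi f = 1$ (handling the degenerate case $\E_\pi f = 0$ separately as per the stated convention, and using homogeneity of both sides to reduce to this case), the inequality becomes $-\log \E_\rho f \le \on{ent}_\pi(f) + \log(1 + \chi^2(\pi \mmid \rho))$. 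This reformulation suggests that the key is to lower-bound $\E_\rho f$ in terms of $\E_\pi(f\log f)$ and the divergence between the measures.

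The central step I would use is a change-of-measure / variational argument based on the Gibbs variational principle (the Donsker--Varadhan duality). Writing $\frac{\D \pi}{\D \rho}$ for the density, I would express $\E_\rho f = \E_\pi \bigl[ f \, \frac{\D \rho}{\D \pi}\bigr]$ and seek a lower bound. A clean route is to use the entropy inequality in the form $\log \E_\pi[e^g] \ge \E_\pi[g h] - \on{ent}_\pi(h)$ for probability densities $h$ with respect to $\pi$, or more directly to invoke the variational characterization that for any nonnegative $f$ and any measurable $g$,
\begin{equation*}
    \E_\pi[f g] \le \on{ent}_\pi(f) + \E_\pi(f) \log \E_\pi[e^g]\,.
\end{equation*}
Applying this with the choice $e^g = \frac{\D \rho}{\D \pi}$ (so that $g = \log \frac{\D \rho}{\D \pi}$) gives $\E_\pi\bigl[f \log \frac{\D \rho}{\D \pi}\bigr] \le \on{ent}_\pi(f) + \E_\pi(f) \log \E_\pi\bigl[\frac{\D \rho}{\D \pi}\bigr]$. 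The term $\E_\pi[f \log \frac{\D \rho}{\D \pi}]$ is exactly related to $\E_\rho f$ versus $\E_\pi f$ after rearranging, and $\E_\pi\bigl[\frac{\D \rho}{\D \pi}\bigr] = \int \D \rho$ restricted to the support of $\pi$, which is at most $1$ — so this particular choice controls the quantity in the wrong direction and I would instead swap the roles, applying the variational inequality so that the leftover normalization term produces $\log(1 + \chi^2(\pi \mmid \rho))$ rather than $\log \E_\pi[\D\rho/\D\pi]$.

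Concretely, the right pairing is to bound $\E_\pi f \log \frac{\E_\pi f}{\E_\rho f}$ by rewriting $\frac{\E_\pi f}{\E_\rho f}$ and applying Jensen's inequality to the convex function $t \mapsto t \log t$ against the tilted measure. The cleanest version: by the Donsker--Varadhan variational formula applied to the function $\log f - \log \frac{\D\rho}{\D\pi}$, one obtains $\on{ent}_\pi(f) = \sup_g \{\E_\pi[fg] - \E_\pi(f)\log\E_\pi[e^g]\}$, and choosing $g$ so that $e^g \propto \frac{\D\rho}{\D\pi}$ isolates $\E_\rho f$ on one side while the normalization of $e^g$ under $\pi$ yields exactly $\E_\pi\bigl[\frac{\D\rho}{\D\pi}\bigr]$; to recover the chi-squared term I would instead measure the density $\frac{\D\pi}{\D\rho}$ and use $\E_\pi\bigl[\frac{\D\pi}{\D\rho}\bigr] = 1 + \chi^2(\pi \mmid \rho)$, which is the identity that makes the factor $\log(1+\chi^2(\pi\mmid\rho))$ appear. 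The main obstacle I anticipate is choosing the correct tilting function $g$ and the correct direction of the divergence so that the variational/entropy inequality produces precisely $\log(1 + \chi^2(\pi \mmid \rho))$ — keeping straight which Radon--Nikodym derivative appears, since $\E_\pi[\D\pi/\D\rho] = 1 + \chi^2(\pi\mmid\rho)$ is the crucial identity linking the normalization constant to the chi-squared divergence, and getting this bookkeeping right (including absolute continuity assumptions, under which both sides are otherwise $+\infty$) is where the argument must be executed carefully.
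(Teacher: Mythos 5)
Your proposal is correct and takes essentially the same route as the paper: after the identical homogeneity reduction to $\E_\pi f = 1$, both arguments pass through the intermediate quantity $\E_\pi\bigl[f \log \frac{\D \pi}{\D \rho}\bigr]$ --- your Jensen/log-sum step bounding $\E_\pi f \log \frac{\E_\pi f}{\E_\rho f}$ by it is exactly the paper's first Donsker--Varadhan application (to the tilted measure $\D \pi_f = f \, \D\pi$ against $\rho$), and your duality step $\E_\pi[fg] \leq \on{ent}_\pi(f) + \E_\pi(f) \log \E_\pi[e^g]$ with $e^g = \frac{\D\pi}{\D\rho}$ and the identity $\E_\pi\bigl[\frac{\D\pi}{\D\rho}\bigr] = 1 + \chi^2(\pi \mmid \rho)$ is exactly the paper's second. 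The differences (phrasing one step as Jensen rather than Donsker--Varadhan, and your initial exploration of the wrong tilting $e^g = \frac{\D\rho}{\D\pi}$ before correcting it) are cosmetic.
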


\begin{proof}
Recall the Donsker--Varadhan theorem\footnote{See~\cite[Theorem 5.4]{rassoulaghaseppalainen2015largedeviations} or~\cite[Lemma 6.2.13]{dembozeitouni2010largedeviations}.}: for any probability measures $\mu$ and $\nu$, it holds
\begin{equation}\label{eq:dv_fact}
    D_{\rm KL}(\mu \mmid \nu) = \sup_{g}{\{\E_\mu g - \log \E_\nu \exp(g)\}}\,,
\end{equation}
where the supremum is taken over all $g$ for which the expectations on the right side make sense.

We may assume that $\pi$ is absolutely continuous with respect to $\rho$ and that $\E_\pi(f \log f) < \infty$; otherwise, the expression on the right side is infinite.
We may therefore assume that $0 < \E_\pi f < \infty$, and, since each term in the lemma statement is homogeneous in $f$, we may assume without loss of generality that $\E_\pi f = 1$.

Define a new probability measure $\pi_f$ by $\frac{\D \pi_f}{\D \pi} = f$.
Then,
\begin{align*}
    \E_\pi\bigl[f \log \frac{f}{\E_\rho f}\bigr] = \E_{\pi_f} \log \frac{f}{\E_\rho f}
    &\leq D_{\rm KL}(\pi_f \mmid \rho) + \log \E_\rho \exp \log \frac{f}{\E_\rho f}\\
    &= D_{\rm KL}(\pi_f \mmid \rho)\,,
\end{align*}
where we have used~\eqref{eq:dv_fact}.
Since
\begin{equation*}
    D_{\rm KL}(\pi_f \mmid \rho) = \E_\pi\bigl[f \log\bigl(f\, \frac{\D \pi}{\D \rho}\bigr)\bigr]\,,
\end{equation*}
subtracting $\E_\pi(f \log f)$ from both sides of the inequality above and recalling that we have assumed that $\E_\pi f = 1$ yields
\begin{equation*}
    \E_\pi f \log \frac{\E_\pi f}{\E_\rho f}
    \leq \E_\pi \bigl[f \log \frac{\D \pi}{\D \rho}\bigr]\,.
\end{equation*}

Continuing, we have again by \eqref{eq:dv_fact} that
\begin{align*}
    \E_\pi\bigl[f \log \frac{\D \pi}{\D \rho}\bigr] = \E_{\pi_f} \log \frac{\D \pi}{\D \rho} &\leq D_{\rm KL}(\pi_f \mmid \pi) + \log \E_\pi \exp \log \frac{\D \pi}{\D \rho}\\
    &= \E_\pi(f \log f) + \log\bigl(1+\chi^2(\pi\mmid \rho)\bigr)\,,
\end{align*}
as claimed.
\end{proof}

\begin{proof}[Proof of Theorem~\ref{thm:LS_new}~\eqref{item:LS}]
Let $X,X'$ be i.i.d.\ copies with law $\mu$. 
The second term $\ent \E_{P_X}(f^2)$ in \eqref{eq:decomp_lsi_new} can be written as
\begin{align*}
   \ent \E_{P_X}(f^2)= \E\Bigl[ \E_{P_{X}}(f^2) \log\frac{\E_{P_{X}}(f^2)}{\E_{\mu P}(f^2)}\Bigr]\,.
\end{align*}
Setting $\pi = P_X$ and $\rho =\mu P$ in Lemma~\ref{lemma:ineq_new}, we obtain
\begin{align}\label{eq:decom_LS_2}
    \ent \E_{P_X}(f^2)
    \leq \E \on{ent}_{P_X}(f^2)+ \E\bigl[\E_{P_X}(f^2) \log\bigl(1 + \chi^2(P_X \mmid P_{X'})\bigr) \bigr]
\end{align}
where we also used the convexity of the chi-squared divergence in the second inequality. 
The definition of $K_{p,\,\chi^2}(P;\mu)$ in \eqref{eq:K_chi^2_new} ensures that
\begin{align*}
    \E \exp\Bigl( p\log \bigl(1+\chi^2(\PP_X\mmid\PP_{X'})\bigr)- p \log K_{p,\,\chi^2}(P;\mu)\Bigr) \leq 1\,.
\end{align*}
Using the variational principle for the entropy~\cite[Lemma 3.15]{vanhandel2018probability}:
\begin{align*}
    \ent Y
    &= \sup\{\E(YZ) \mid Z~\text{is a random variable with}~\E\exp Z \le 1\}\,,
\end{align*}
we obtain
\begin{align*}
    &\E\bigl[\E_{P_X}(f^2) \log\bigl(1 + \chi^2(P_X\mmid P_{X'})\bigr) \bigr] \\
    &\qquad \leq \frac{1}{p}\ent \E_{\PP_X}(f^2)+ \log K_{p,\,\chi^2}(P;\mu) \E_{\mu\PP}(f^2)\,.
\end{align*}
Substituting this into~\eqref{eq:decom_LS_2} yields
\begin{align*}
    \ent \E_{\PP_X}(f^2) \leq p^* \, \bigl\{\E \ent_{\PP_X}(f^2)+\log K_{p,\,\chi^2}(P;\mu) \E_{\mu\PP}(f^2)\bigr\}\,.
\end{align*}
We insert this into \eqref{eq:decomp_lsi_new} to obtain:
\begin{align*}
    \ent_{\mu\PP}(f^2)
    &\le (p^* + 1) \E\ent_{\PP_X}(f^2) + p^* \log K_{p,\,\chi^2}(P;\mu) \E_{\mu\PP}(f^2) \\
    &\leq 4p^* K_{\on{LS}}(P;\mu)\E_{\mu\PP}\Gamma(f)+ p^*\log K_{p,\,\chi^2}(P;\mu)\E_{\mu\PP}(f^2)\,.
\end{align*}
This inequality is known as a \emph{defective LSI} (see~\cite[\S 5]{bakrygentilledoux2014}). It is standard that a defective LSI together with a Poincar\'e inequality implies a full LSI; this is known as \emph{tightening} the LSI, and we refer to Appendix~\ref{scn:tightening_lsi}
 for details. Together with the PI in the first assertion of Theorem~\ref{thm:LS_new}, this completes the proof.
\end{proof}

\section{Applications}\label{scn:applications}


\subsection{Gaussian convolutions}\label{scn:gaussian_conv}

Set $\sY =\R^d$, $\mc A = C_{\rm b}^\infty(\R^d)$ (infinitely differentiable functions with bounded derivatives), and $\Gamma(f) = \norm{\nabla f}^2$. Let $\mu$ be a probability measure supported on $B(0,R):=\{x\in\R^d:\|x\|\leq R\}$, and for $x\in \R^d$ and $t>0$, let \begin{align*}
    \gamma_{x,t}( y)=\frac{1}{{(2\pi t)}^\frac{d}{2}} \, e^{-\frac{\|y-x\|^2}{2t}}
\end{align*}
be the Gaussian with mean $x$ and variance $tI_d$.
If we take $\PP_x = \gamma_{x,t}$, then the measure $\mu\PP$ is the convolution $\mu * \gamma_{0,t}$.

Functional inequalities for the measure $\mu * \gamma_{0,t}$ were studied in~\cite{zimmermann1, zimmermann2}, and further investigated in~\cite{wang2016functional, bardetetal2018gaussianconv}. In particular,~\cite{bardetetal2018gaussianconv} proves that $C_{\rm P}(\mu * \gamma_{0,t})$ is bounded above by a function of $R$ and $t$, and is therefore dimension-free.

For the log-Sobolev constant, these works also show that $C_{\rm LS}(\mu * \gamma_{0,t})$ is finite, but the precise dependence of this constant (in particular on the dimension) was previously unknown. Bardet et al.~\cite{bardetetal2018gaussianconv} verify in several cases that $C_{\rm LS}(\mu * \gamma_{0,t})$ is dimension-free, and they conjecture that this is true in general. We now show that their conjecture is an immediate consequence of Theorem~\ref{thm:LS_new}.

It is well-known that $\gamma_{x,t}$ satisfies~\eqref{eq:lsi} with $C_{\rm LS}(\gamma_{x,t}) = t$.
Also, for $x,x'\in \R^d$ and $t\geq 0$, a straightforward computation shows that
\begin{align*}
    \chi^2(\gamma_{x,t}\mmid \gamma_{x',t}) = e^{\|x-x'\|^2/t}-1\,.
\end{align*}
Hence, $K_{\infty,\;\chi^2}(\PP;\mu) \le e^{4R^2/t}$ and we deduce the following result.

\begin{cor}\label{thm:LS_gausssian}
Let $\mu$ be a probability measure on $\R^d$ supported on $B(0,R)$ for some $R\geq 0$.
Then, for each $t\geq 0$, $\mu*\gamma_{0,t}$ satisfies~\eqref{eq:lsi} with
\begin{align*}
    C_{\rm LS}(\mu*\gamma_{0,t}) \leq 6 \, (4R^2+t) \, e^\frac{4R^2}{t}\,.
\end{align*}
\end{cor}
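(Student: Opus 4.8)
The plan is to apply Theorem~\ref{thm:LS_new}~\eqref{item:LS} directly to the family $\PP_x = \gamma_{x,t}$ with mixing distribution $\mu$, since the corollary is advertised as an immediate consequence. The three quantities entering the theorem's bound are already supplied in the surrounding text: the uniform LSI constant satisfies $K_{\rm LS}(\PP;\mu) = t$ because each $\gamma_{x,t}$ satisfies the LSI with constant exactly $t$ (this is the standard Gaussian log-Sobolev inequality, independent of the mean $x$ and the dimension $d$), and the chi-squared quantity satisfies $K_{\infty,\chi^2}(\PP;\mu) \le e^{4R^2/t}$ using the explicit formula $\chi^2(\gamma_{x,t}\mmid\gamma_{x',t}) = e^{\|x-x'\|^2/t}-1$ together with the support bound $\|x-x'\| \le 2R$, which gives $1 + \chi^2 \le e^{4R^2/t}$.

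The key reduction is to work in the limiting case $p = \infty$, so that $p^* = 1$, as permitted by the Remark following Theorem~\ref{thm:LS_new}. First I would record that with $p^*=1$ the LSI bound reads
\begin{align*}
    C_{\rm LS}(\mu*\gamma_{0,t}) \le 3 K_{\rm LS}\,\bigl(1 + K_{\infty,\chi^2}\bigr)\,\bigl(1 + \log K_{\infty,\chi^2}\bigr)\,.
\end{align*}
Next I would substitute $K_{\rm LS} = t$ and $K_{\infty,\chi^2} \le e^{4R^2/t}$. The logarithmic factor becomes $1 + \log e^{4R^2/t} = 1 + 4R^2/t$, and the factor $1 + K_{\infty,\chi^2} \le 1 + e^{4R^2/t} \le 2e^{4R^2/t}$.

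Assembling these pieces gives a bound of the form $3t \cdot 2e^{4R^2/t}\cdot (1 + 4R^2/t) = 6\,e^{4R^2/t}\,(t + 4R^2)$, which is exactly the claimed inequality $6\,(4R^2+t)\,e^{4R^2/t}$. The only genuinely nonroutine point is checking that the various ``$+1$'' slacks in the general bound collapse cleanly into the stated form; in particular one must verify the elementary inequality $1 + e^{4R^2/t} \le 2e^{4R^2/t}$ (which holds since $e^{4R^2/t} \ge 1$) and confirm that the product $t\,(1 + 4R^2/t) = t + 4R^2$ reproduces the linear factor exactly. I also need to address the boundary case $t = 0$, where $\mu*\gamma_{0,0} = \mu$ and the bound $e^{4R^2/t}$ is interpreted as $+\infty$, making the inequality vacuously true. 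I expect no real obstacle here: the entire content is that the three inputs to Theorem~\ref{thm:LS_new} are dimension-free for Gaussian convolutions, and the arithmetic is designed so that the $p=\infty$ specialization yields the advertised constant.
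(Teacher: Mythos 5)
Your proof is correct and takes essentially the same route as the paper: the corollary is read off from Theorem~\ref{thm:LS_new}~\eqref{item:LS} in the $p=\infty$ case of the Remark, using $K_{\rm LS}(\PP;\mu)=t$ and $K_{\infty,\chi^2}(\PP;\mu)\le e^{4R^2/t}$. Your arithmetic, namely $1+e^{4R^2/t}\le 2\,e^{4R^2/t}$ and $t\,(1+4R^2/t)=t+4R^2$, is exactly how the stated constant $6\,(4R^2+t)\,e^{4R^2/t}$ arises, so there is no gap.
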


Bardet et al.\ also prove that $\mu * \gamma_{0,t}$ satisfies a $T_2$ transport-entropy inequality with a dimension-dependent constant; see~\cite[\S 9.3]{villani2003topics} for the relevant background. Since a log-Sobolev inequality implies a $T_2$ inequality with the same constant~\cite{ottovillani2000lsi}, we immediately obtain the following improvement.

\begin{cor}
    Let $\mu$ be a probability measure on $\R^d$ supported on $B(0,R)$ for some $R\geq 0$.
    Then, for each $t\geq 0$, $\mu*\gamma_{0,t}$ satisfies a $T_2$ transport-entropy inequality with constant
\begin{align*}
    C_{T_2}(\mu*\gamma_{0,t}) \leq 6 \, (4R^2+t) \, e^\frac{4R^2}{t}\,.
\end{align*}
\end{cor}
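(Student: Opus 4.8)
The plan is to derive this as an immediate consequence of the log-Sobolev inequality just established, using the classical implication of Otto and Villani that an LSI forces a $T_2$ inequality with the same constant. Since we remain in the setting of Section~\ref{scn:gaussian_conv}---that is, $\sY = \R^d$ with the Euclidean carr\'e du champ $\Gamma(f) = \norm{\nabla f}^2$---the log-Sobolev constant $C_{\rm LS}(\mu * \gamma_{0,t})$ appearing in Corollary~\ref{thm:LS_gausssian} is exactly the constant entering the Otto--Villani theorem, so no reconciliation between frameworks is needed.

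Concretely, I would recall from~\cite{ottovillani2000lsi} that if a probability measure $\rho$ on $\R^d$ satisfies~\eqref{eq:lsi} with constant $C$, then it satisfies the Talagrand transport-entropy inequality
\begin{align*}
    W_2^2(\nu,\rho) \leq 2C \, D_{\rm KL}(\nu \mmid \rho)
\end{align*}
for every probability measure $\nu$, where $W_2$ denotes the quadratic Wasserstein distance; writing $C_{T_2}(\rho)$ for the optimal such constant, the theorem asserts $C_{T_2}(\rho) \leq C_{\rm LS}(\rho)$. (One checks the normalization against the standard Gaussian, for which both constants equal $1$.) Applying this with $\rho = \mu * \gamma_{0,t}$ and inserting the bound $C_{\rm LS}(\mu * \gamma_{0,t}) \leq 6\,(4R^2+t)\,e^{4R^2/t}$ from Corollary~\ref{thm:LS_gausssian} then yields the claimed estimate at once.

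The argument has no genuine obstacle, as it is a one-step invocation of a known result; the only point worth verifying is that the hypotheses of~\cite{ottovillani2000lsi} are met. Since $\mu * \gamma_{0,t}$ is a Gaussian convolution, it has a smooth, everywhere-positive density, so the Otto--Villani implication applies without any additional regularity argument, and the dimension-free $T_2$ constant is inherited directly from the log-Sobolev bound.
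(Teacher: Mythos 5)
Your proof is correct and is exactly the paper's argument: the authors likewise obtain the corollary by applying the Otto--Villani theorem (LSI implies $T_2$ with the same constant) to the log-Sobolev bound of Corollary~\ref{thm:LS_gausssian}. Your additional checks on normalization and regularity are fine but not needed beyond the one-line invocation.
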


\begin{rmk}
    These results show that evolving a compactly supported measure for a short time under the heat flow yields dimension-free functional inequalities, which can be interpreted as a strong regularizing effect of the heat flow. This is in line with other results on the smoothing behavior of the heat flow, e.g.~\cite{eldanlee2018regularization}.
\end{rmk}
\begin{rmk}[sharpness of the result] \label{rmk:improving_asymp_const}
    As $t\to\infty$, Corollary~\ref{thm:LS_gausssian} implies that
    \begin{align*}
        \limsup_{t\to\infty} \frac{C_{\rm LS}(\mu * \gamma_{0,t})}{t} \le 6\,.
    \end{align*}
    It is easy to improve this to $1$, which is sharp. Indeed, from the subadditivity of the log-Sobolev constant under convolution, for $t \ge 4R^2$,
    \begin{align*}
        C_{\rm LS}(\mu * \gamma_{0,t})
        &\le C_{\rm LS}(\mu * \gamma_{0,4R^2}) + C_{\rm LS}(\gamma_{0,t-4R^2})
        \le t + 130\,R^2\,.
    \end{align*}
    
    On the other hand, as $t\searrow 0$, the exponential dependence on $R^2/t$ cannot be avoided, as a simple example shows. Indeed, consider the measure $\mu = \frac{1}{2} \,\delta_{-R} + \frac{1}{2}\, \delta_R$ in one dimension and $0 < t \ll R$. Define the function $f : \R\to [-1,1]$ via
    \begin{align*}
        f(x)
        &:= \begin{cases} -1~\text{for}~x < -R/2, \\ +1~\text{for}~x > +R/2, \\ \text{linear interpolation in between}. \end{cases}
    \end{align*}
    Let $g$ denote a standard Gaussian variable.
    Then, $\E_{\mu * \gamma_{0,t}} f = 0$, so
    \begin{align*}
        \var_{\mu * \gamma_{0,t}} f
        &= \E_{\mu * \gamma_{0,t}}(f^2) \\
        &\ge \frac{1}{2} \, \Pr\bigl\{-R + \sqrt t \, g \le - \frac{R}{2}\bigr\} + \frac{1}{2} \, \Pr\bigl\{R + \sqrt t \,  g \ge \frac{R}{2}\bigr\} \\
        &= \Pr\bigl\{g \le \frac{R}{2\sqrt t}\bigr\}
        \ge \frac{1}{2}\,.
    \end{align*}
    On the other hand, $\abs{f'} = 2/R$ on $[-R/2, R/2]$, so
    \begin{align*}
        \E_{\mu * \gamma_{0,t}}(\abs{f'}^2)
        &\le \frac{4}{R^2} \, \Pr\bigl\{ g \ge \frac{R}{2\sqrt t}\bigr\}
        \le \frac{2}{R^2} \exp\bigl(-\frac{R^2}{8t}\bigr)\,,
    \end{align*}
    by standard Gaussian tail bounds. This yields the following lower bound on the Poincar\'e constant of $\mu * \gamma_{0,t}$:
    \begin{align*}
        C_{\rm P}(\mu * \gamma_{0,t})
        &\ge \frac{1}{4} \, R^2 \exp \frac{R^2}{8t}\,.
    \end{align*}
    Hence, the exponential dependence on $R^2/t$ is already present in the Poincar\'e constant. However, it is worth noting that the $\exp(4R^2/t)$ dependence in the log-Sobolev constant enters \emph{only} via the Poincar\'e constant through the method of tighening a defective log-Sobolev inequality. In particular, if $\mu$ is known \emph{a priori} to satisfy a Poincar\'e inequality with constant $C_{\rm P}(\mu)$, then $\mu * \gamma_{0,t}$ satisfies a Poincar\'e inequality with constant $C_{\rm P}(\mu * \gamma_{0,t}) \le C_{\rm P}(\mu) + t$, and the log-Sobolev inequality no longer suffers an explicit exponential dependence on $R^2/t$.
\end{rmk}

\subsection{Extension to sub-Gaussian tails}\label{sec:sg_tails}

Consider the setting in the previous section.
However, we now relax the assumption that $\mu$ has bounded support, and instead assume that $\mu$ has sub-Gaussian tails.
More specifically, assume that there exist constants $\sigma^2, C_{\rm SG}$ such that
\begin{align}\label{eq:sg_tails}
    \iint \exp\bigl(\frac{\norm{x-x'}^2}{\sigma^2}\bigr) \, \D \mu(x) \, \D \mu(x') \le C_{\rm SG}\,.
\end{align}
Since a log-Sobolev inequality implies sub-Gaussian tails~\cite[\S 5.4]{bakrygentilledoux2014}, the existence of such constants $\sigma^2$, $C_{\rm SG}$ are certainly necessary in order for $\mu * \gamma_{0,t}$ to satisfy~\eqref{eq:lsi}. We will show that if $t$ is a large enough multiple of $\sigma^2$, then we indeed obtain a log-Sobolev constant for $\mu * \gamma_{0,t}$, and we will explicitly estimate the constant.

The main point is to estimate, for $X$, $X'$ i.i.d.\ from $\mu$,
\begin{align*}
    \E[{\{1 + \chi^2(\gamma_{X,t} \mmid \gamma_{X',t})\}}^p]
    &= \iint \exp\bigl(\frac{p \, \norm{x-x'}^2}{t}\bigr) \, \D \mu(x) \, \D \mu(x')
    \le C_{\rm SG}\,,
\end{align*}
provided that $t/p \ge \sigma^2$; then, ${K_{p,\; \chi^2}(\PP; \mu)}^{p^*} \le C_{\rm SG}^{p^*/p}$.
We therefore take $p = t/\sigma^2$ and we obtain as an immediate consequence of Theorem~\ref{thm:LS_new} the following result.

\begin{thm}\label{thm:lsi_sg_tails}
    Suppose $\mu$ is a probability measure on $\R^d$ satisfying~\eqref{eq:sg_tails} and that $t > \sigma^2$. Then, $\mu * \gamma_{0,t}$ satisfies both~\eqref{eq:pi} and~\eqref{eq:lsi}, with
    \begin{align*}
        C_{\rm P}(\mu * \gamma_{0,t})
        &\le t\,\bigl\{ \frac{t}{t-\sigma^2} + C_{\rm SG}^{\sigma^2/(t-\sigma^2)}\bigr\}\,,
    \end{align*}
    and
    \begin{align*}
        C_{\rm LS}(\mu * \gamma_{0,t})
        &\le 3t \, \bigl\{ \frac{t}{t-\sigma^2} + C_{\rm SG}^{\sigma^2/(t-\sigma^2)}\bigr\} \, \bigl\{1 + \frac{\sigma^2}{t-\sigma^2} \log C_{\rm SG}\bigr\}\,.
    \end{align*}
\end{thm}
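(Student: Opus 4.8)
The plan is to obtain the statement as a direct application of Theorem~\ref{thm:LS_new} to the kernel $\PP_x = \gamma_{x,t}$, so that the whole task reduces to evaluating the three quantities $K_{\rm P}(\PP;\mu)$, $K_{\rm LS}(\PP;\mu)$, and $K_{p,\chi^2}(\PP;\mu)$ for this specific choice and then tuning $p$. First I would record that every $\gamma_{x,t}$ is a Gaussian with covariance $tI_d$, hence satisfies both~\eqref{eq:pi} and~\eqref{eq:lsi} with constant $t$; taking the essential supremum over $x$ gives $K_{\rm P}(\PP;\mu) = K_{\rm LS}(\PP;\mu) = t$, independently of $x$ and therefore of the dimension.

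The only genuinely $\mu$-dependent quantity is the chi-squared moment. Using the explicit formula $1 + \chi^2(\gamma_{x,t}\mmid\gamma_{x',t}) = \exp(\norm{x-x'}^2/t)$, I would write, for $X,X'$ i.i.d.\ from $\mu$,
\[
    K_{p,\chi^2}(\PP;\mu)^p = \E\bigl[{\{1+\chi^2(\gamma_{X,t}\mmid\gamma_{X',t})\}}^p\bigr] = \iint \exp\bigl(\tfrac{p\,\norm{x-x'}^2}{t}\bigr)\,\D\mu(x)\,\D\mu(x')\,.
\]
The key step is to match this integral to the sub-Gaussian hypothesis~\eqref{eq:sg_tails}: as long as $p/t \le 1/\sigma^2$, i.e.\ $t/p \ge \sigma^2$, the exponent is dominated by $\norm{x-x'}^2/\sigma^2$, so the integral is bounded by $C_{\rm SG}$, whence $K_{p,\chi^2}(\PP;\mu) \le C_{\rm SG}^{1/p}$.

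The decisive choice is to saturate this constraint by taking $p = t/\sigma^2$, which is admissible in Theorem~\ref{thm:LS_new} (i.e.\ $p>1$) precisely because of the hypothesis $t > \sigma^2$. A short computation then gives $p^* = t/(t-\sigma^2)$ and $K_{p,\chi^2}^{p^*} \le C_{\rm SG}^{\sigma^2/(t-\sigma^2)}$. Substituting $K_{\rm P} = t$, this value of $p^*$, and the bound on $K_{p,\chi^2}^{p^*}$ into the Poincar\'e estimate of Theorem~\ref{thm:LS_new}~\eqref{item:P} yields the claimed $C_{\rm P}$; substituting $K_{\rm LS} = t$ together with $\log K_{p,\chi^2}^{p^*} \le \tfrac{\sigma^2}{t-\sigma^2}\log C_{\rm SG}$ into the log-Sobolev estimate of Theorem~\ref{thm:LS_new}~\eqref{item:LS} yields the claimed $C_{\rm LS}$. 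I do not expect any genuine obstacle: the substantive content is carried entirely by Theorem~\ref{thm:LS_new}, and the only real decision is the tuning of $p$, with the sole bookkeeping being the exponent arithmetic $p^*/p = \sigma^2/(t-\sigma^2)$.
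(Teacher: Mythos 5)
Your proposal is correct and follows exactly the paper's own argument: specialize Theorem~\ref{thm:LS_new} to the Gaussian kernel $\PP_x = \gamma_{x,t}$ with $K_{\rm P} = K_{\rm LS} = t$, bound $K_{p,\chi^2}^p$ by $C_{\rm SG}$ via the identity $1+\chi^2(\gamma_{x,t}\mmid\gamma_{x',t}) = e^{\norm{x-x'}^2/t}$ whenever $t/p \ge \sigma^2$, and saturate with $p = t/\sigma^2$ so that $p^* = t/(t-\sigma^2)$ and $K_{p,\chi^2}^{p^*} \le C_{\rm SG}^{\sigma^2/(t-\sigma^2)}$. The exponent arithmetic and substitutions match the paper's computation precisely.
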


\begin{rmk}
    The first part of Theorem~\ref{thm:lsi_sg_tails} was observed without proof in~\cite{courtade2020poincare}.
\end{rmk}

\begin{rmk}
    The result of Theorem~\ref{thm:lsi_sg_tails} recovers the result of Corollary~\ref{thm:LS_gausssian}, albeit with worse constants. Indeed, if $\mu$ has support contained in the ball $B(0,R)$ and $t > 0$, then we can take $\sigma^2 = t/2$ and
    \begin{align*}
        C_{\rm SG}
        &= \iint \exp\bigl( \frac{2 \, \norm{x-x'}^2}{t}\bigr) \, \D \mu(x) \, \D \mu(x')
        \le \exp \frac{8R^2}{t}\,.
    \end{align*}
    Then, Theorem~\ref{thm:lsi_sg_tails} yields a log-Sobolev inequality for $\mu * \gamma_{0,t}$ with a similar dependence as Corollary~\ref{thm:LS_gausssian}.
\end{rmk}

\begin{rmk}
    The sub-Gaussian tail condition~\eqref{eq:sg_tails} is equivalent to $\mu$ satisfying a $T_1$ transportation-cost inequality~\cite{bolleyvillani2005weightedpinsker}. Hence, our result shows that sufficient Gaussian smoothing upgrades a $T_1$ inequality to a log-Sobolev inequality.
    
    Note that the condition $t > \sigma^2$ is similar to the condition in~\cite[Theorem 1.2]{wang2016functional}.
\end{rmk}

\begin{rmk}
    As in Remark~\ref{rmk:improving_asymp_const}, the Poincar\'e and log-Sobolev constants here can easily be improved when $t\to\infty$ to improve the constant factor in front of $t$ to $1$.
\end{rmk}

\subsection{General diffusions}\label{sect:gen_diff}
We now consider a different extension of the setting in Section~\ref{scn:gaussian_conv}.
Let ${(P^t)}_{t\geq 0}$ be a Markov semigroup on $(\sY,\borelY)$ with invariant measure $\pi$ and infinitesimal generator $\ms L$. Let $\mc A $ be an algebra of bounded measurable functions such that $\mc A$ is dense in $L^2(\sY,\pi)$; $\mc A$ is contained in the domain of $\ms L$; and the \textit{carr\'e du champ} operator $\Gamma:\mc A\times \mc A \to \mc A$ given by
\begin{align*}
    \Gamma (f,g) = \frac{1}{2}\,\bigl(\ms L(fg) - f\ms L g- g \ms L f\bigr)
\end{align*}
is well defined for $f,g\in \mc A$. We assume these objects satisfy the conditions specified in \cite[\S 1.14]{bakrygentilledoux2014} so that results therein are applicable. For $\curv\in\R$ and $t\geq 0$, we set 
\begin{equation*}
    C_{\mathrm{loc}}(\curv,t) :=
    \begin{cases}
    (1-e^{-2\curv t})/\curv,\quad & \curv \neq 0\\
    2t,\quad & \curv = 0.
    \end{cases}
\end{equation*}
We recall the following result (\cite[Theorem 5.5.2]{bakrygentilledoux2014}).
\begin{lem}\label{lemma:loc}
For every $\curv\in\R$, the following statements are equivalent.
\begin{enumerate}
    \item The curvature-dimension condition $\mathrm{CD}(\curv,\infty)$ holds.
    \item For all $x\in E$ and $t\geq 0$,
    \begin{align*}
        C_{\on{LS}}(P^t_x)\leq C_{\on{loc}}(\curv,t)\,.
    \end{align*}
\end{enumerate}
\end{lem}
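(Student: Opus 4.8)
The plan is to establish this equivalence by the standard semigroup interpolation ($\Gamma$-calculus) argument, exactly as in~\cite[\S 5.5]{bakrygentilledoux2014}, and I sketch the two implications. Recall that $\mathrm{CD}(\curv,\infty)$ is the pointwise inequality $\Gamma_2(f)\ge\curv\,\Gamma(f)$ for all $f\in\mc A$, where $\Gamma_2(f):=\frac{1}{2}\,\ms L\Gamma(f)-\Gamma(f,\ms L f)$ is the iterated carr\'e du champ. It is convenient to first recast the LSI in statement (2) into its linearized form: using the diffusion chain rule $\Gamma(f^2)=4f^2\,\Gamma(f)$ and the substitution $g=f^2$, the bound $\on{ent}_{P^t_x}(f^2)\le 2\,C_{\on{loc}}(\curv,t)\,P^t(\Gamma(f))(x)$ is equivalent to
\begin{align*}
    P^t(g\log g)(x)-P^t g(x)\,\log P^t g(x)\le \tfrac{1}{2}\,C_{\on{loc}}(\curv,t)\,P^t\bigl(\Gamma(g)/g\bigr)(x)
\end{align*}
for all positive $g\in\mc A$.

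For the implication (1) $\Rightarrow$ (2), I would fix $x$, $t>0$, and a positive $g$, and introduce the interpolation $\Lambda(s):=P^s\bigl((P^{t-s}g)\log P^{t-s}g\bigr)(x)$ for $s\in[0,t]$. Its boundary values are $\Lambda(0)=(P^t g\,\log P^t g)(x)$ and $\Lambda(t)=P^t(g\log g)(x)$, so $\Lambda(t)-\Lambda(0)$ is precisely the left-hand side above. Differentiating in $s$ and using the diffusion chain rule to cancel the two terms involving $\ms L(P^{t-s}g)$ yields the clean identity $\Lambda'(s)=P^s\bigl(\Gamma(P^{t-s}g)/P^{t-s}g\bigr)(x)$. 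The only place the curvature hypothesis enters is the local gradient estimate
\begin{align*}
    \frac{\Gamma(P^r g)}{P^r g}\le e^{-2\curv r}\,P^r\bigl(\Gamma(g)/g\bigr)\,,
\end{align*}
valid under $\mathrm{CD}(\curv,\infty)$, which is itself obtained from a second interpolation whose $s$-derivative is a nonnegative multiple of $\Gamma_2-\curv\,\Gamma\ge 0$. Inserting this bound into $\Lambda'$ and using the semigroup property $P^sP^{t-s}=P^t$, integration over $s\in[0,t]$ produces the factor $\int_0^t e^{-2\curv(t-s)}\,\D s=(1-e^{-2\curv t})/(2\curv)=\tfrac{1}{2}\,C_{\on{loc}}(\curv,t)$, which is exactly the claimed constant.

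For the converse (2) $\Rightarrow$ (1), I would extract the curvature condition from the short-time behavior of the local LSI. Using the expansions $P^t h=h+t\,\ms L h+\tfrac{t^2}{2}\,\ms L^{2} h+O(t^3)$ and $C_{\on{loc}}(\curv,t)=2t-2\curv\,t^2+O(t^3)$, one expands both sides of the linearized inequality in $t$ about $t=0$. The $O(t)$ terms agree identically, while matching the $O(t^2)$ coefficients collapses—after invoking the definition of $\Gamma_2$—to the pointwise inequality $\Gamma_2(g)\ge\curv\,\Gamma(g)$, i.e.\ $\mathrm{CD}(\curv,\infty)$.

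The routine but genuinely technical part, and the main obstacle, is justifying the differentiations under the semigroup and the interchange of $P^s$ with $\ms L$ and with the nonlinear maps $u\mapsto u\log u$ and $u\mapsto\Gamma(u)/u$; this is exactly what the standing regularity assumptions on $(\ms L,\mc A)$ from~\cite[\S 1.14]{bakrygentilledoux2014} are designed to license. In the interest of brevity I would therefore simply invoke~\cite[Theorem 5.5.2]{bakrygentilledoux2014} rather than reproduce these verifications.
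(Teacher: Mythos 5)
The paper offers no proof of this lemma at all---it simply recalls it as \cite[Theorem 5.5.2]{bakrygentilledoux2014}---and since your argument ends by invoking that very theorem, your approach is essentially the same as the paper's; the interpolation sketch you give is a faithful outline of the cited proof (reformulation via the chain rule, the interpolation $\Lambda(s)$, the local gradient estimate, and the short-time expansion for the converse). One minor imprecision, immaterial since you defer to the citation anyway: the key estimate $\Gamma(P^r g)/P^r g \le e^{-2\curv r}\, P^r\bigl(\Gamma(g)/g\bigr)$ is not obtained from an interpolation whose derivative is merely a nonnegative multiple of $\Gamma_2 - \curv\,\Gamma$; it requires the strong gradient bound $\sqrt{\Gamma(P^r g)} \le e^{-\curv r}\, P^r\sqrt{\Gamma(g)}$ combined with Cauchy--Schwarz, and for diffusions that bound rests on the self-improvement $\Gamma_2 \ge \curv\,\Gamma + \Gamma(\Gamma)/(4\Gamma)$ of $\mathrm{CD}(\curv,\infty)$, not on $\Gamma_2 \ge \curv\,\Gamma$ alone.
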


The following result is then a special case of Theorem~\ref{thm:LS_new}.
\begin{cor}\label{thm:LS_diffusion}
Suppose that the curvature-dimension condition $\on{CD}(\curv,\infty)$ holds for some $\curv\in\R$. Let $\mu$ be a probability measure on $(\sY,\borelY)$. Then, for every $t\geq 0$,
\begin{equation*}
    C_{\on{LS}}(\mu P^t) \leq 6C_{\on{loc}}(\curv,t) \,K_{\infty,\;\chi^2}(P^t; \mu)\, \{1 + \log K_{\infty,\;\chi^2}(P^t; \mu)\}\,.
\end{equation*}

\end{cor}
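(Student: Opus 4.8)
The plan is to derive Corollary~\ref{thm:LS_diffusion} as a direct specialization of Theorem~\ref{thm:LS_new}~\eqref{item:LS}, with no new analytic input. First I would match notation by taking the Markov kernel of the main theorem to be $\PP = P^t$ for the fixed time horizon $t$, i.e.\ setting $\PP_x := P^t_x = P^t(x,\cdot)$. With this identification the mixture $\mu\PP$ is exactly $\mu P^t$, and the carr\'e du champ $\Gamma$ of the semigroup plays the role of the gradient form $\Gamma$ throughout, so that the LSI for each individual measure $\PP_x$ and the LSI for $\mu P^t$ are both measured against the same $\Gamma$. Consequently it suffices to estimate the two quantities $K_{\rm LS}(P^t;\mu)$ and $K_{\infty,\;\chi^2}(P^t;\mu)$ appearing in the theorem and then invoke it.

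Next I would bound the uniform log-Sobolev constant of the kernel. Since the curvature-dimension condition $\mathrm{CD}(\curv,\infty)$ is assumed, Lemma~\ref{lemma:loc} supplies $C_{\on{LS}}(P^t_x) \le C_{\on{loc}}(\curv,t)$ for every $x$ and every $t\ge 0$. Taking the essential supremum over $x$ in the definition~\eqref{eq:K_LS_new} then yields at once
\begin{equation*}
    K_{\rm LS}(P^t;\mu) = \esssup_{x} C_{\on{LS}}(P^t_x) \le C_{\on{loc}}(\curv,t)\,.
\end{equation*}
The pairwise chi-squared term $K_{\infty,\;\chi^2}(P^t;\mu)$ is left untouched; if it is infinite the claimed bound is vacuous, so we may assume it is finite and the hypotheses of Theorem~\ref{thm:LS_new}~\eqref{item:LS} are met.

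Finally I would apply Theorem~\ref{thm:LS_new}~\eqref{item:LS} in the limiting case $p=\infty$, for which $p^* = 1$ and the relevant quantity is $K_{\infty,\;\chi^2}$ as recorded in the remark following the theorem; this gives
\begin{equation*}
    C_{\on{LS}}(\mu P^t) \le 3 K_{\rm LS} \, (1 + K_{\infty,\;\chi^2}) \, (1 + \log K_{\infty,\;\chi^2})\,.
\end{equation*}
Because $K_{\infty,\;\chi^2} = 1 + \esssup_{x,x'} \chi^2(P^t_x \mmid P^t_{x'}) \ge 1$, we have $1 + K_{\infty,\;\chi^2} \le 2 K_{\infty,\;\chi^2}$; substituting this estimate together with $K_{\rm LS} \le C_{\on{loc}}(\curv,t)$ produces exactly the stated bound
\begin{equation*}
    C_{\on{LS}}(\mu P^t) \le 6 C_{\on{loc}}(\curv,t) \, K_{\infty,\;\chi^2}(P^t;\mu) \, \{1 + \log K_{\infty,\;\chi^2}(P^t;\mu)\}\,.
\end{equation*}
I do not expect a genuine obstacle here, since all the work has been offloaded onto Theorem~\ref{thm:LS_new} and Lemma~\ref{lemma:loc}. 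The only points demanding care are bookkeeping: verifying that the local LSI of Lemma~\ref{lemma:loc} is stated with respect to the \emph{same} carr\'e du champ $\Gamma$ that defines the LSI for $\mu P^t$ (so that Theorem~\ref{thm:LS_new} applies verbatim), and correctly specializing the constant to the $p=\infty$ endpoint with $p^*=1$.
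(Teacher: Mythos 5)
Your proposal is correct and follows exactly the paper's intended derivation: the paper likewise obtains this corollary by combining Lemma~\ref{lemma:loc} (which gives $K_{\rm LS}(P^t;\mu)\le C_{\on{loc}}(\curv,t)$ under $\mathrm{CD}(\curv,\infty)$) with Theorem~\ref{thm:LS_new}~\eqref{item:LS} at the endpoint $p=\infty$, $p^*=1$, and the same elementary bound $1+K_{\infty,\;\chi^2}\le 2K_{\infty,\;\chi^2}$ accounts for the constant $6$.
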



\begin{rmk}
    If $\sY$ is a complete connected Riemannian manifold and the diffusion has generator $\ms L = \Delta + \langle \nabla V, \nabla\cdot\rangle$ which satisfies the curvature-dimension condition, then under mild conditions the constant $K_{\infty,\;\chi^2}(P^t;\mu)$ is finite for any measure $\mu$ with bounded support, as a consequence of heat kernel estimates in~\cite{gongwang2001heatkernel}.
\end{rmk}

\subsection{Mixtures of two distributions}\label{scn:mixtures_of_two}

In this section, we consider the case when $\sX = \{0,1\}$ is the two-point space. Then, the mixing distribution $\mu$ is a Bernoulli distribution with a mixing weight $p \in [0, 1]$, and the measure $\mu\PP$ is the convex combination
\begin{align}\label{eq:mixture_of_two}
    \mu\PP
    &= (1-p) \PP_0 + p \PP_1\,.
\end{align}

Functional inequalities for such mixtures were studied in~\cite{chafaimalrieu2010mixtures, schlichting2019mixtures}. One of the interesting findings of these papers is that as the mixing weight $p$ tends to $\{0,1\}$, the Poincar\'e constant can remain bounded whereas the log-Sobolev constant diverges logarithmically. Specifically,~\cite{schlichting2019mixtures} shows that if $\PP_0$ and $\PP_1$ satisfy~\eqref{eq:lsi}, $p \in (0,1)$, and either $\chi^2(\PP_0 \mmid \PP_1)$ or $\chi^2(\PP_1 \mmid \PP_0)$ is finite, then $\mu\PP$ satisfies~\eqref{eq:lsi}. Note that this last assumption is weaker than ours, which requires both $\chi^2(\PP_0 \mmid \PP_1)$ and $\chi^2(\PP_1 \mmid \PP_0)$ to be finite.
However, even under our stronger assumption, the bound of~\cite{schlichting2019mixtures} on the log-Sobolev constant diverges in general as $p\to \{0,1\}$.

We now present our results for this setting for comparison.

\begin{cor}
    For all $p \in [0, 1]$, the mixture~\eqref{eq:mixture_of_two} satisfies~\eqref{eq:lsi} with
    \begin{align*}
        C_{\rm LS}(\mu\PP)
        &\le 6 \max\{C_{\rm LS}(\PP_0), C_{\rm LS}(\PP_1)\} \, K_{\chi^2} \, \{1+\log(1+K_{\chi^2})\}\,,
    \end{align*}
    where $K_{\chi^2} := \max\{\chi^2(\PP_0 \mmid \PP_1), \chi^2(\PP_1 \mmid \PP_0)\}$.
\end{cor}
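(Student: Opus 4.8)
The plan is to read this off as the special case $\sX = \{0,1\}$ of Theorem~\ref{thm:LS_new}~\eqref{item:LS}, applied at the endpoint exponent $p = \infty$ (so that $p^* = 1$) using the quantity $K_{\infty,\chi^2}$ from the remark following the theorem. The entire content of the corollary is the evaluation of the two structural quantities $K_{\rm LS}(\PP;\mu)$ and $K_{\infty,\chi^2}(\PP;\mu)$ for a Bernoulli mixing measure; once these are identified, the estimate drops out by substitution.

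First I would dispose of the degenerate weights $p \in \{0,1\}$, for which the mixture~\eqref{eq:mixture_of_two} coincides with $\PP_0$ or $\PP_1$ and the claim is immediate, since the right-hand side is at least $6\max\{C_{\rm LS}(\PP_0), C_{\rm LS}(\PP_1)\} \ge C_{\rm LS}(\PP_i)$. So I may assume $p \in (0,1)$, which guarantees that both atoms $0$ and $1$ carry positive $\mu$-mass. I would then compute $K_{\rm LS}(\PP;\mu)$: since $x \mapsto C_{\rm LS}(\PP_x)$ takes only the two values $C_{\rm LS}(\PP_0)$ and $C_{\rm LS}(\PP_1)$, each on a set of positive $\mu$-measure, the essential supremum in~\eqref{eq:K_LS_new} is exactly $\max\{C_{\rm LS}(\PP_0), C_{\rm LS}(\PP_1)\}$.

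Next I would compute $K_{\infty,\chi^2}(\PP;\mu)$. With $X, X'$ i.i.d.\ from $\mu$, the pair $(X,X')$ takes each of the four values in $\{0,1\}^2$ with positive probability; the divergence $\chi^2(\PP_X \mmid \PP_{X'})$ vanishes on the diagonal and equals $\chi^2(\PP_0 \mmid \PP_1)$ or $\chi^2(\PP_1 \mmid \PP_0)$ off the diagonal. Hence the relevant essential supremum is $K_{\chi^2}$ and $K_{\infty,\chi^2}(\PP;\mu) = 1 + K_{\chi^2}$. Substituting $p^* = 1$, together with these two evaluations, into the bound of Theorem~\ref{thm:LS_new}~\eqref{item:LS} gives $C_{\rm LS}(\mu\PP) \le 3\,K_{\rm LS}\,(1 + K_{\infty,\chi^2})\,(1 + \log K_{\infty,\chi^2}) = 3\,K_{\rm LS}\,(2 + K_{\chi^2})\,(1 + \log(1+K_{\chi^2}))$, and the elementary bound $2 + K_{\chi^2} \le 2\,(1+K_{\chi^2})$ then yields the stated inequality.

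There is no genuine obstacle here: the corollary is a clean specialization of the main theorem, and the only point requiring (minor) care is the diagonal/off-diagonal case analysis used to evaluate $K_{\infty,\chi^2}$, together with the observation that $p \in (0,1)$ is exactly what is needed to make both essential suprema equal to the naive maxima over the two points. The conceptual payoff, emphasized in the surrounding discussion, is that the resulting constant depends on the mixing weight $p$ only through the fact that $p \in (0,1)$; in particular it remains bounded as $p \to \{0,1\}$, in contrast to the estimates of~\cite{schlichting2019mixtures}.
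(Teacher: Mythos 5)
Your route is exactly the paper's: the corollary carries no separate proof in the text precisely because it is the $p=\infty$ specialization of Theorem~\ref{thm:LS_new}~\eqref{item:LS}, and your evaluations $K_{\rm LS}(\PP;\mu)=\max\{C_{\rm LS}(\PP_0),C_{\rm LS}(\PP_1)\}$ and $K_{\infty,\chi^2}(\PP;\mu)=1+K_{\chi^2}$ for $p\in(0,1)$ are correct, as is the substitution giving $C_{\rm LS}(\mu\PP)\le 3K_{\rm LS}\,(2+K_{\chi^2})\,(1+\log(1+K_{\chi^2}))$.

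However, your final sentence does not deliver what it claims. The bound $2+K_{\chi^2}\le 2\,(1+K_{\chi^2})$ yields $C_{\rm LS}(\mu\PP)\le 6K_{\rm LS}\,(1+K_{\chi^2})\,(1+\log(1+K_{\chi^2}))$, which is \emph{weaker} than, and does not imply, the printed statement, whose middle factor is $K_{\chi^2}$ rather than $1+K_{\chi^2}$. This mismatch cannot be repaired by a better argument: the printed statement is false as written. Take $\PP_0=\PP_1=\gamma_{0,1}$ on $\R$; then $K_{\chi^2}=0$, so the right-hand side vanishes, while $C_{\rm LS}(\mu\PP)=C_{\rm LS}(\gamma_{0,1})=1$. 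The factor $K_{\chi^2}$ in the corollary should therefore be read as $1+K_{\chi^2}$, which is consistent with the hypercube corollary in Section~\ref{scn:hypercube} (whose bound carries ${\{1+K_{\chi^2}(\pi)\}}^k$) and with the derivation of Corollary~\ref{thm:LS_gausssian}, where the analogous step $1+K_{\infty,\chi^2}\le 2K_{\infty,\chi^2}$ is legitimate only because $K_{\infty,\chi^2}\ge 1$. What you actually proved is this corrected form, i.e., the statement the theorem genuinely yields. The same caveat applies to your treatment of $p\in\{0,1\}$: the claim that the right-hand side is at least $6\max\{C_{\rm LS}(\PP_0),C_{\rm LS}(\PP_1)\}$ holds for the corrected bound, not the printed one. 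In short, your derivation is sound and matches the paper's intended argument, but you should flag that the substitution produces $1+K_{\chi^2}$, not $K_{\chi^2}$; the printed corollary contains a typo at exactly the point where you assert the conclusion.
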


In particular, our assumption $K_{\chi^2} < \infty$ guarantees that the mixture satisfies~\eqref{eq:lsi} with a constant independent of $p$, and hence does not exhibit a logarithmic divergence as $p\to \{0,1\}$. We refer to the aforementioned papers for further discussion and examples of mixtures.

\subsection{Analogues on the hypercube}\label{scn:hypercube}

We now present another interesting illustration of our results.
Here, we take $\sX = {\{0,1\}}^n$ to be the Boolean hypercube, and we take $\sY := \eu Y^n$ to be a product space. We also require the $\Gamma$ operator on $\sY$ to be consistent with the product structure; for simplicity of presentation, we omit this discussion and instead think of $\Gamma$ as being either the squared gradient operator $\Gamma(f) = \norm{\nabla f}^2$ on Euclidean space, or the discrete gradient $\Gamma(f) = {(Df)}^2$ as described in Section~\ref{scn:background}.
Let $\pi_0$, $\pi_1$ be two probability measures on $\eu Y$ with
\begin{align*}
    K_{\rm LS}(\pi)
    &:= \max\{C_{\rm LS}(\pi_0), C_{\rm LS}(\pi_1)\} < \infty\,, \\
    K_{\chi^2}(\pi)
    &:= \max\{\chi^2(\pi_0 \mmid \pi_1), \chi^2(\pi_1 \mmid \pi_0)\} < \infty\,.
\end{align*}
Given $x \in {\{0,1\}}^n$, define the measure
\begin{align}\label{eq:hamming_prod_measure}
    \PP_x
    &= \bigotimes_{i=1}^n \pi_{x_i}\,.
\end{align}
From the tensorization of the chi-squared divergence,
\begin{align*}
    \chi^2(P_x \mmid P_{x'})
    &= \prod_{i=1}^n \{1+\chi^2(\pi_{x_i} \mmid \pi_{x_i'})\} - 1
    \le {\{1 + K_{\chi^2}(\pi)\}}^{d(x,x')} - 1\,,
\end{align*}
where $d(\cdot,\cdot)$ denotes the Hamming metric on ${\{0,1\}}^n$. Moreover, each $\PP_x$ satisfies~\eqref{eq:lsi} with a constant at most $K_{\rm LS}(\pi)$, due to the classical tensorization of log-Soboblev inequalities. As a consequence, we deduce the following result from Theorem~\ref{thm:LS_new}.

\begin{cor}
    Suppose $\mu$ is a probability measure on ${\{0,1\}}^n$ which is supported on a set of diameter at most $k$ in the Hamming metric.
    Then, the mixture distribution $\mu\PP := \sum_{x\in {\{0,1\}}^n} \mu(x) \PP_x$, with $\PP_x$ as in~\eqref{eq:hamming_prod_measure}, satisfies~\eqref{eq:lsi} with
    \begin{align*}
        C_{\rm LS}(\mu\PP)
        &\le 6k \, K_{\rm LS}(\pi) \, {\{1 + K_{\chi^2}(\pi)\}}^k \,\bigl\{1 + \log\bigl(1 + K_{\chi^2}(\pi)\bigr)\bigr\}\,.
    \end{align*}
\end{cor}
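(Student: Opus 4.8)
The plan is to feed the two tensorization bounds already assembled just before the statement into the $p=\infty$ case of Theorem~\ref{thm:LS_new}~\eqref{item:LS}, and then absorb everything into the stated constant. Concretely, I would first identify the two quantities $K_{\rm LS}(\PP;\mu)$ and $K_{\infty,\chi^2}(\PP;\mu)$ attached to the kernel $\PP$ and the mixing distribution $\mu$, and bound each of them separately.

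For the log-Sobolev quantity, each $\PP_x = \bigotimes_{i=1}^n \pi_{x_i}$ is a product whose factors are drawn from $\{\pi_0,\pi_1\}$, so the classical tensorization of the LSI gives $C_{\rm LS}(\PP_x) \le \max\{C_{\rm LS}(\pi_0), C_{\rm LS}(\pi_1)\} = K_{\rm LS}(\pi)$ for every $x$; taking the essential supremum yields $K_{\rm LS}(\PP;\mu) \le K_{\rm LS}(\pi)$. For the chi-squared quantity, I would invoke the diameter hypothesis: since $\mu$ is supported on a set of Hamming-diameter at most $k$, we have $d(x,x') \le k$ for $\mu\otimes\mu$-almost every pair $(x,x')$, and hence, by the multiplicative tensorization of $1 + \chi^2$ recalled above, $1 + \chi^2(\PP_x \mmid \PP_{x'}) \le \{1 + K_{\chi^2}(\pi)\}^{d(x,x')} \le \{1 + K_{\chi^2}(\pi)\}^k$ almost surely. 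By the definition of $K_{\infty,\chi^2}$ in the remark following Theorem~\ref{thm:LS_new}, this gives $K_{\infty,\chi^2}(\PP;\mu) \le \{1 + K_{\chi^2}(\pi)\}^k$.

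With these two bounds in hand, I would apply Theorem~\ref{thm:LS_new}~\eqref{item:LS} in the limiting regime $p=\infty$, $p^*=1$, which reads $C_{\rm LS}(\mu\PP) \le 3\,K_{\rm LS}(\PP;\mu)\,(1 + K_{\infty,\chi^2})\,(1 + \log K_{\infty,\chi^2})$. Both parenthesized factors are increasing in $K_{\infty,\chi^2}$, so I may substitute the upper bound $a^k$, where $a := 1 + K_{\chi^2}(\pi) \ge 1$, to obtain $C_{\rm LS}(\mu\PP) \le 3\,K_{\rm LS}(\pi)\,(1 + a^k)\,(1 + k\log a)$.

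The only remaining work is constant bookkeeping: using $a^k \ge 1$ to write $1 + a^k \le 2a^k$, and using $k \ge 1$ to write $1 + k\log a \le k\,(1 + \log a)$, collapses the right-hand side to $6k\,K_{\rm LS}(\pi)\,a^k\,(1 + \log a)$, which is precisely the claimed bound. I do not expect any genuine obstacle here: the substantive content—the LSI for the mixture, uniformly in $\mu$—is supplied entirely by Theorem~\ref{thm:LS_new}, and the tensorization inputs are standard and already displayed. The only points requiring a little care are the passage to the $p=\infty$ case rather than optimizing over finite $p$ (justified by the remark defining $K_{\infty,\chi^2}$), and the mild convention $k \ge 1$ used in the last simplification; the degenerate case $k=0$, where $\mu$ is a point mass and $\mu\PP$ is a single product measure, is covered directly by tensorization.
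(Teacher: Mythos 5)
Your proof is correct and takes essentially the same route as the paper: the paper likewise bounds $K_{\rm LS}(\PP;\mu)$ by $K_{\rm LS}(\pi)$ and $K_{\infty,\chi^2}(\PP;\mu)$ by ${\{1+K_{\chi^2}(\pi)\}}^k$ via tensorization and then invokes the $p=\infty$ case of Theorem~\ref{thm:LS_new}~\eqref{item:LS}. Your constant bookkeeping (including the observation that the final simplification uses $k\ge 1$) simply makes explicit the arithmetic the paper leaves to the reader.
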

Importantly, the log-Sobolev inequality is dimension-free in the sense that it depends only on properties of $\pi_0$ and $\pi_1$ as well as the diameter $k$ of the support of $\mu$. An example of such a measure $\mu$ is any measure which is supported on $k/2$-sparse strings.

We now specialize this result to obtain an analogue of the result for Gaussian convolutions in Section~\ref{scn:gaussian_conv} to the setting of the Boolean hypercube. Let $0 < p < 1/2$, and we take $\pi_0$ and $\pi_1$ to be the Bernoulli distributions with parameters $p$ and $1-p$ respectively. Also, we take the $\Gamma$ operator to be the square of discrete gradient. The optimal log-Sobolev inequality for these distributions is given in~\cite[Problem 8.3]{vanhandel2018probability}, and a quick computation yields
\begin{align*}
    K_{\rm LS}(\pi)
    &= \frac{p \, (1-p)}{2 \, (1-2p)} \log \frac{1-p}{p}\,, \qquad K_{\chi^2}(\pi) = \frac{{(1-p)}^2}{p} + \frac{p^2}{1-p}-1\,.
\end{align*}
Note that the mixture $\mu\PP$ can be interpreted as the result of evolving the initial measure $\mu$ for a short time under the natural semigroup on the hypercube. We obtain the following result.

\begin{cor}
    Suppose $\mu$ is a probability measure on ${\{0,1\}}^n$ which is supported on a set of diameter at most $k$ in the Hamming metric.
    Then, the mixture distribution $\mu\PP$ with $0 < p < 1/2$ satisfies~\eqref{eq:lsi} with
    \begin{align*}
        C_{\rm LS}(\mu\PP)
        &\le \frac{6k}{p^{k-1} \, (1-2p)} \log^2 \frac{1}{p}\,.
    \end{align*}
\end{cor}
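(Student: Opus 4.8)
The plan is computational: substitute the explicit formulas for $K_{\rm LS}(\pi)$ and $K_{\chi^2}(\pi)$ recorded just above into the preceding corollary and simplify. Writing $L := \log\frac{1}{p}$ and factoring out the common $6k$, it suffices to prove
\[
    K_{\rm LS}(\pi) \, {\{1 + K_{\chi^2}(\pi)\}}^{k} \, \{1 + \log(1 + K_{\chi^2}(\pi))\} \le \frac{L^2}{p^{k-1} \, (1-2p)}\,.
\]

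First I would simplify the chi-squared factor. Placing the two terms of $K_{\chi^2}(\pi)$ over the common denominator $p(1-p)$ and using $(1-p)^3 + p^3 = 1 - 3p + 3p^2$ gives $1 + K_{\chi^2}(\pi) = \frac{1 - 3p + 3p^2}{p(1-p)}$. I then claim $1 + K_{\chi^2}(\pi) \le \frac{1}{p}$: after clearing denominators this is equivalent to $3p^2 - 2p \le 0$, i.e.\ $p \, (3p-2) \le 0$, which holds for every $p \le \frac{2}{3}$ and in particular on $(0,\frac{1}{2})$. Consequently ${\{1 + K_{\chi^2}(\pi)\}}^{k} \le p^{-k}$ and $\log(1 + K_{\chi^2}(\pi)) \le L$, so the bracketed log-term is at most $1 + L$.

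For the remaining factor I would use $\log\frac{1-p}{p} = L + \log(1-p) \le L$. Assembling the pieces---and, crucially, keeping the factor $(1-p)$ that appears in $K_{\rm LS}(\pi)$ rather than bounding it by $1$---the left-hand side is at most
\[
    \frac{p \, (1-p)}{2 \, (1-2p)} \, L \cdot p^{-k} \cdot (1 + L) = \frac{(1-p) \, L \, (1+L)}{2 \, (1-2p) \, p^{k-1}}\,.
\]
Comparing this with the target reduces the entire statement to the elementary inequality $(1-p)(1+L) \le 2L$, equivalently $(1-p) \le (1+p) \, L = (1+p) \log\frac{1}{p}$.

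The main obstacle is establishing this last inequality uniformly on all of $(0,\frac{1}{2})$; this is exactly why the lossy bound $1 - p \le 1$ must be avoided, since for $p$ near $\frac{1}{2}$ one has $L = \log\frac{1}{p} < 1$ and a naive comparison fails. I would prove it by setting $h(p) := (1+p) \log\frac{1}{p} - (1-p)$ and differentiating: $h'(p) = \log\frac{1}{p} - \frac{1}{p} < 0$ because $\log u < u$ with $u = \frac{1}{p} > 2$. Hence $h$ is strictly decreasing on $(0,\frac{1}{2})$, so $h(p) > h(\frac{1}{2}) = \frac{3}{2}\log 2 - \frac{1}{2} > 0$ throughout, which is the desired inequality. (Consistent with this, the factor $\frac{1}{1-2p}$ makes the right-hand side blow up as $p \to \frac{1}{2}$, so the bound is in fact very slack there.)
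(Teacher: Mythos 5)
Your proposal is correct and follows exactly the route the paper intends (but leaves implicit): substituting the displayed formulas for $K_{\rm LS}(\pi)$ and $K_{\chi^2}(\pi)$ into the preceding corollary and simplifying, via $1+K_{\chi^2}(\pi)=\frac{1-3p+3p^2}{p\,(1-p)}\le \frac{1}{p}$ and $\log\frac{1-p}{p}\le\log\frac1p$. Your careful retention of the factor $(1-p)$ and the monotonicity argument for $(1-p)\le(1+p)\log\frac1p$ correctly handles the regime $p\in(1/e,1/2)$, where $\log\frac1p<1$ and the naive bound would fail—this is precisely the detail the paper's ``quick computation'' glosses over.
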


\section*{Acknowledgments}

Sinho Chewi was supported by the Department of Defense (DoD) through the National Defense Science \& Engineering Graduate Fellowship (NDSEG) Program. Jonathan Niles-Weed was supported in part by National Science Foundation (NSF) grant DMS-2015291.

\appendix

\section{Tightening of LSI}\label{scn:tightening_lsi}

The following proposition is a standard result, see~\cite[Proposition 5.1.3]{bakrygentilledoux2014}. It is straightforward to see that bilinearity of $\Gamma$ and our assumption~\eqref{eq:Gamma_prop} are sufficient for the proof to go through.


\begin{prop}\label{prop:tighten}
\leavevmode
\begin{enumerate}
    \item \label{item:prop_1} If $\rho$ satisfies~\eqref{eq:lsi}, then $\rho$ satisfies~\eqref{eq:pi} with $C_{\on{P}}(\rho)\leq C_{\on{LS}}(\rho)$.  
    \item \label{item:prop_2} If $\rho$ satisfies the following defective LSI
    \begin{align*}
        \ent_\rho(f^2)\leq 2C\E_\rho\Gamma(f)+D\E_\rho(f^2)\,\qquad\forall f\in\mc A\,,
    \end{align*}
    together with~\eqref{eq:pi}, then $\rho$ satisfies~\eqref{eq:lsi} with
    \begin{align*}
        C_{\rm LS}(\rho)
        &\le C+C_{\rm P}(\rho) \,\bigl(\frac{D}{2}+1\bigr)\,.
    \end{align*}
\end{enumerate}
\end{prop}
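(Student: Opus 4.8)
The plan is to prove the two assertions by different routes: part~\eqref{item:prop_1} by linearization of the LSI, and part~\eqref{item:prop_2} by the classical Rothaus tightening argument. The only structural properties of $\Gamma$ I will use are bilinearity and $\Gamma(1,1)=0$; in particular, Cauchy--Schwarz for the bilinear form gives $\Gamma(1,g)^2 \le \Gamma(1,1)\,\Gamma(g,g)=0$, so $\Gamma(c,\cdot)=0$ for every constant $c$, and hence $\Gamma(f+c)=\Gamma(f)$.

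For part~\eqref{item:prop_1}, I would apply the LSI to $f = 1 + \varepsilon g$ for a fixed $g\in\mc A$ and a small parameter $\varepsilon$, and then send $\varepsilon\to 0$. On the energy side, bilinearity together with $\Gamma(1,g)=0$ yields the exact identity $\E_\rho\Gamma(1+\varepsilon g) = \varepsilon^2\,\E_\rho\Gamma(g)$. On the entropy side, a second-order Taylor expansion gives $\ent_\rho\bigl((1+\varepsilon g)^2\bigr) = 2\varepsilon^2\var_\rho(g) + O(\varepsilon^3)$. Substituting both into the LSI, dividing by $2\varepsilon^2$, and letting $\varepsilon\to 0$ produces $\var_\rho(g) \le C_{\rm LS}(\rho)\,\E_\rho\Gamma(g)$, i.e.\ the PI with $C_{\rm P}(\rho)\le C_{\rm LS}(\rho)$.

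For part~\eqref{item:prop_2}, the engine is Rothaus's lemma, which asserts that for any $f$, writing $f_0 := f - \E_\rho f$,
\[
    \ent_\rho(f^2) \le \ent_\rho(f_0^2) + 2\var_\rho(f)\,,
\]
where $\var_\rho(f)=\E_\rho(f_0^2)$. The steps are then: first, apply the defective LSI to the centered function $f_0$, using $\Gamma(f_0)=\Gamma(f)$, to get $\ent_\rho(f_0^2)\le 2C\,\E_\rho\Gamma(f) + D\var_\rho(f)$; second, insert this into Rothaus's inequality, obtaining $\ent_\rho(f^2)\le 2C\,\E_\rho\Gamma(f) + (D+2)\var_\rho(f)$; third, bound the residual variance by the PI, $\var_\rho(f)\le C_{\rm P}(\rho)\,\E_\rho\Gamma(f)$; and finally collect constants, reading off $\ent_\rho(f^2)\le\bigl(2C + (D+2)\,C_{\rm P}(\rho)\bigr)\E_\rho\Gamma(f)$, which is the claimed LSI with $C_{\rm LS}(\rho)\le C + C_{\rm P}(\rho)(D/2+1)$.

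The main obstacle is Rothaus's lemma itself, which is the one non-elementary input and is precisely what causes the additive defect $D$ to be absorbed into the Poincar\'e term rather than surviving in the final LSI constant. I would either cite it from the literature (e.g.\ \cite[\S 5.1]{bakrygentilledoux2014}) or supply a short self-contained proof; importantly, its statement concerns only the entropy functional and does not reference $\Gamma$, so it transfers verbatim to the present abstract setting. Every use of the carr\'e-du-champ structure in the argument reduces to bilinearity and $\Gamma(1,1)=0$, confirming the remark preceding the proposition that these two properties suffice.
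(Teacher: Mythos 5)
Your proposal is correct and follows essentially the same route as the paper, which does not spell out a proof but defers to the standard argument of~\cite[Proposition 5.1.3]{bakrygentilledoux2014}: linearization around constants for part~\eqref{item:prop_1}, and Rothaus' lemma combined with the Poincar\'e inequality for part~\eqref{item:prop_2}, with constants matching exactly. Your observation that Cauchy--Schwarz for the positive semidefinite bilinear form $\Gamma$ forces $\Gamma(1,g)=0$, hence $\Gamma(f+c)=\Gamma(f)$, is precisely the justification behind the paper's remark that bilinearity and~\eqref{eq:Gamma_prop} suffice for the standard proof to go through.
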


\RaggedRight{}
\printbibliography{}

\end{document}